\documentclass[11pt,reqno]{amsart}

\usepackage[letterpaper,margin=1.12in]{geometry}
\usepackage{amsmath,amssymb,mathtools}
\usepackage{microtype}
\usepackage{enumitem}
\usepackage{xcolor}
\usepackage[colorlinks=true,linkcolor=blue!55!black,citecolor=blue!55!black,
  urlcolor=blue!55!black]{hyperref}

\allowdisplaybreaks
\numberwithin{equation}{section}

\makeatletter
\def\section{\@startsection{section}{1}%
  \z@{1.3\linespacing\@plus\linespacing}{.5\linespacing}%
  {\normalfont\scshape\centering}}
\makeatother

\newtheorem{theorem}{Theorem}[section]
\newtheorem{proposition}[theorem]{Proposition}
\newtheorem{lemma}[theorem]{Lemma}
\newtheorem{corollary}[theorem]{Corollary}
\theoremstyle{definition}

\theoremstyle{remark}
\newtheorem{remark}[theorem]{Remark}
\newtheorem*{unnumberedremark}{Remark}

\newcommand{\R}{\mathbb R}
\newcommand{\Sph}{\mathbb S}
\newcommand{\Euc}{\mathrm{Euc}}
\newcommand{\Ric}{\operatorname{Ric}}
\newcommand{\Scal}{\operatorname{Scal}}
\newcommand{\Rm}{\operatorname{Rm}}
\newcommand{\Hess}{\operatorname{Hess}}
\newcommand{\Vol}{\operatorname{Vol}}
\newcommand{\AVR}{\operatorname{AVR}}
\newcommand{\Id}{\operatorname{Id}}
\newcommand{\tr}{\operatorname{tr}}
\newcommand{\dd}{\,\mathrm d}
\newcommand{\cH}{\mathcal H}
\newcommand{\cD}{\mathcal D}
\newcommand{\thmref}[1]{\hyperref[#1]{Theorem~\ref*{#1}}}
\newcommand{\propref}[1]{\hyperref[#1]{Proposition~\ref*{#1}}}
\newcommand{\lemref}[1]{\hyperref[#1]{Lemma~\ref*{#1}}}
\newcommand{\corref}[1]{\hyperref[#1]{Corollary~\ref*{#1}}}
\newcommand{\remref}[1]{\hyperref[#1]{Remark~\ref*{#1}}}

\title{pole regularity of Green function and rigidity}

\author{Zhelei Huang}
\address{Capital Normal University\ ,BEIJING,CHINA}
\email{2260501005@cnu.edu.cn}

\author{Jianshen Xiong}
\address{Capital Normal University\ ,BEIJING,CHINA}
\email{2240502158@cnu.edu.cn}

\subjclass[2020]{Primary 53C21, 58J05; Secondary 31C12, 53C24}
\keywords{Green function, nonnegative Ricci curvature, rigidity,
pole regularity, monotonicity formula, Jacobi equation}

\begin{document}

\begin{abstract}
Let $G(p,\cdot)$ be the normalized minimal positive Green function on a
complete nonparabolic Riemannian manifold with nonnegative Ricci curvature, and set $b=G^{1/(2-n)}$.
Colding established the following rigidity result in dimension three: if
$|\nabla b|^2$ admits a $C^2$ regularity across the pole, then the manifold
must be isometric to Euclidean space. We show that rigidity conclusions
corresponding to the pole regularity condition alone do not hold in higher
dimensions: for any $n\geq4$, we can construct explicit complete nonflat
rotationally symmetric manifolds with nonnegative sectional curvature and
Euclidean volume growth, such that $|\nabla b|^2$ extends smoothly across
the pole.

For any odd integer $n\geq3$, we impose finite-order pointwise curvature
conditions at the pole (where this condition is vacuous when $n=3$), so
that the Green function satisfies the asymptotic expansion
\[
G(p,x)=r^{2-n}+H_p+O_1(r).
\]
Under this curvature assumption, if $|\nabla b|^2$ extends to a
$C^{n-2}$ function near the pole, then the manifold must be isometric to
$\R^n$, and this regularity threshold is also sharp. As special cases: in
dimension three, if $|\nabla b|^2\in C^1$, then the manifold must be
isometric to Euclidean space; in dimension five, it suffices to assume that
the scalar curvature at the pole satisfies $\Scal(p)=0$ and
$|\nabla b|^2\in C^3$. In contrast, for any even dimension $n\geq4$,
there exist complete nonflat examples that are Euclidean in a neighborhood
of the pole and admit a smooth extension of $|\nabla b|^2$ across the pole.
\end{abstract}

\maketitle
\begingroup
\footnotesize
\setcounter{tocdepth}{1}
\tableofcontents
\endgroup
\newpage

\section{Introduction}

Let \((M^n,g)\) be a smooth, complete, connected, noncompact Riemannian
manifold, $n\geq3$, and suppose that $M$ is nonparabolic (that is, the
minimal positive Green function exists; see \cite{Varopoulos}).  We normalize
the minimal positive Green function with pole $p$ by
\begin{equation}\label{eq:green-normalization}
 -\Delta G(p,\cdot)=(n-2)|\Sph^{n-1}|\,\delta_p,
 \qquad
 G(p,x)=r(x)^{2-n}(1+o(1)),
\end{equation}
where \(\Delta=\operatorname{div}\nabla\) and $r(x)=d(p,x)$.  Thus
\(G(p,x)=|x-p|^{2-n}\) on Euclidean space.  Following Colding
\cite{Colding}, set
\begin{equation}\label{eq:b-def}
 b=G^{1/(2-n)}=G^{-1/(n-2)}.
\end{equation}
The function $b$ is a Green-function regularization of the distance from
$p$; on $M\setminus\{p\}$ it satisfies
\begin{equation}\label{eq:Delta-b2}
 \Delta b^2=2n|\nabla b|^2.
\end{equation}

When $\mathrm{Ric} \ge 0,$
Colding proved that, in dimension three,
if \(|\nabla b|^2\) extends as a $C^2$ function across the pole, then the manifold is
Euclidean \cite{Colding}.  It is therefore natural to ask
whether a finite amount of pole regularity of \(|\nabla b|^2\) yields a
similar rigidity theorem in higher dimensions.

Our first theorem gives a negative answer if no curvature condition is
imposed at the pole.

\begin{theorem}\label{thm:smooth-counterexamples}
For every $n\geq4$ and every $a\in(0,1)$, there is a complete,
rotationally symmetric metric $g_a$ on $\R^n$ such that
\[
 \sec_{g_a}\geq0,
 \qquad (\R^n,g_a)\text{ is nonflat},
 \qquad \AVR(M) = \lim_{r\to\infty} \Vol(B_p(r))/(\omega_n r^n)=a^{n-2}>0,
\]
and the normalized minimal Green distance satisfies
\begin{equation}\label{eq:gradb-explicit}
 |\nabla b|^2=\left(\frac{1+ar^2}{1+r^2}\right)^2.
\end{equation}
In particular, \(\lvert\nabla b\rvert^2\) extends smoothly across the pole.
Moreover,
\begin{equation}\label{eq:scalar-at-pole-explicit}
 \Scal_{g_a}(0)=2n(n-4)(1-a).
\end{equation}
Thus \(\Scal_{g_a}(0)>0\) whenever \(n\geq5\).  For \(n=4\), this
particular family instead satisfies \(\Scal_{g_a}(0)=0\), although the
metric is still nonflat.
\end{theorem}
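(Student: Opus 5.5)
We construct $g_a$ by prescribing $|\nabla b|^2$ and solving for the metric, rather than starting from a warping function. For a rotationally symmetric metric, the Green function is radial, so $b$ is a radial function. We use $s=b$ itself as the radial coordinate, and we read the $r$ in \eqref{eq:gradb-explicit} as this coordinate $s$. Set
\[
\psi(s)=\frac{1+as^2}{1+s^2},\qquad \varphi=\psi^2 .
\]
We look for a metric $g=\varphi(s)^{-1}ds^2+h(s)^2g_{\Sph^{n-1}}$, so that $|\nabla s|^2=\varphi$ automatically. Harmonicity of $G=s^{2-n}$ away from $0$ is equivalent to constancy of the flux $|\nabla G|\,h^{n-1}$, that is, to $s^{1-n}\psi\,h^{n-1}=\text{const}$. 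The normalization at the pole fixes the constant, giving
\[
h(s)=s\,\psi(s)^{-1/(n-1)}.
\]
So the candidate is
\[
g_a=\psi^{-2}ds^2+s^2\psi^{-2/(n-1)}g_{\Sph^{n-1}} .
\]

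\emph{Step 1: smoothness, completeness and the Green function.} Introduce arclength $\rho=\int_0^s\psi^{-1}$ and warping function $f(\rho)=h(s)$. Since $\psi$ is even, smooth, and satisfies $\psi(0)=1$, both $\rho(s)$ and $h(s)$ are odd with unit derivative at $0$. Hence $f$ is odd in $\rho$ with $f'(0)=1$, and $g_a$ is smooth across the pole. Because $\psi\to a$, we get $\rho\sim s/a$, so $g_a$ is complete. Also $f(\rho)/\rho\to a\cdot a^{-1/(n-1)}=a^{(n-2)/(n-1)}$, which gives $\AVR=a^{n-2}$. The function $s^{2-n}$ is positive and harmonic off the pole, has the normalized singularity \eqref{eq:green-normalization}, and tends to $0$ at infinity. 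Hence it is the minimal positive Green function: the difference from the minimal one is a bounded nonnegative harmonic function on $M$ that vanishes at infinity, so it is zero. Therefore $|\nabla b|^2=\varphi$, which is \eqref{eq:gradb-explicit}, and this is smooth in $s^2$.

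\emph{Step 2: curvature sign.} This is the main obstacle. For a warped product, the radial curvatures are $-f''/f$ and the tangential curvatures are $(1-f'^2)/f^2$, so we need $f''\le0$ and $0\le f'\le1$. We compute
\[
\frac{df}{d\rho}=\psi\,\frac{dh}{ds}=\psi^{\frac{n-2}{n-1}}\Bigl(1-\frac{s\psi'}{(n-1)\psi}\Bigr),
\qquad
\frac{s\psi'}{\psi}=-\frac{2(1-a)s^2}{(1+s^2)(1+as^2)}.
\]
Then the plan is as follows. First, check that $f'(\rho)\to a^{(n-2)/(n-1)}\le1$ at infinity and that $f'=1$ at $0$. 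Second, prove that $f'$ is monotone nonincreasing, by verifying $\frac{d}{ds}\bigl(\psi^{(n-2)/(n-1)}(1-\tfrac{s\psi'}{(n-1)\psi})\bigr)\le0$. In the variable $t=s^2$ this reduces to the sign of an explicit rational function, i.e.\ of a polynomial in $t$ with coefficients depending on $a$ and $n$. We would clear denominators and check that all coefficients have the right sign for $n\ge4$ and $a\in(0,1)$. We expect the borderline to occur at $n=3$, where the positive contribution of the factor $1-\frac{s\psi'}{(n-1)\psi}$ is largest, and this would explain the restriction $n\ge4$. Monotonicity of $f'$ then gives both $f''\le0$ and $f'\le1$, hence $\sec\ge0$. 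Nonflatness follows because $f'$ is not identically $1$.

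\emph{Step 3: scalar curvature at the pole.} Expand $\psi=1-(1-a)s^2+O(s^4)$. From this, $\rho=s+\tfrac{1-a}{3}s^3+O(s^5)$ and $h=s+\tfrac{1-a}{n-1}s^3+O(s^5)$. Inverting the first expansion gives
\[
f(\rho)=\rho+c\rho^3+O(\rho^5),\qquad c=(1-a)\Bigl(\tfrac1{n-1}-\tfrac13\Bigr).
\]
At the pole, every sectional curvature equals $-6c$, coming from both $-f''/f$ and $(1-f'^2)/f^2$. Hence
\[
\Scal(0)=-6c\,n(n-1)=2n(n-4)(1-a),
\]
which is \eqref{eq:scalar-at-pole-explicit}. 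The sign statements for $n\ge5$ and $n=4$ are immediate from this formula.
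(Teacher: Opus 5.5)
\textbf{Gap: the displayed identity is proved with $b$ in place of $r$.} You redefine the statement by reading the $r$ in \eqref{eq:gradb-explicit} as the coordinate $s=b$. Throughout the paper, however, $r(x)=d(p,x)$ (this is fixed in the introduction), and it is the arclength coordinate of \eqref{eq:model-metric}. In your metric $b\neq r$. Indeed,
\[
r=\rho(s)=\frac sa-\frac{1-a}{a^{3/2}}\arctan(\sqrt a\,s)=s+\frac{1-a}{3}s^3+O(s^5).
\]
So what you actually prove is $|\nabla b|^2=\bigl(\frac{1+ab^2}{1+b^2}\bigr)^2$. In terms of $r$ this differs from $\bigl(\frac{1+ar^2}{1+r^2}\bigr)^2$ already at order $r^4$: the difference is $\frac43(1-a)^2r^4+O(r^6)$. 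Hence \eqref{eq:gradb-explicit} as stated fails for your $g_a$.

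To get the stated formula you must prescribe $b'$ as a function of arclength, which is what the paper does: $u_a(r)=\frac{1+ar^2}{1+r^2}$, $b_a=\int_0^ru_a=ar+(1-a)\arctan r$, and $f_a=b_au_a^{-1/(n-1)}$ (\lemref{lem:inverse-construction}). The price is that $b_a$ is transcendental. The concavity check in \propref{prop:concavity} therefore has to reduce the sign of $f''$ to that of $F(r)=-(m-2)rU^2+bK(r)$, and it needs the extra bound $b\le r$ wherever $K>0$.

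Everything else in your proposal is correct for your metric: smoothness at the pole, completeness, $\AVR=a^{n-2}$, the minimality argument for $s^{2-n}$, the expansion of $f$, and $\Scal(0)=2n(n-4)(1-a)$. Its qualitative content serves the paper's purpose just as well. Your Step 2 was only a plan, but it does go through, and more simply than the paper's check. Put $m=n-1$, $t=s^2$, $c=1-a$, $D=(1+t)(1+at)$, and let $F(t)=\frac{df}{d\rho}$. Then
\[
\frac{d}{dt}\log F=\frac{c\,P(t)}{mD\,(mD+2ct)},\qquad
P(t)=m(3-m)-(m-1)\bigl(m(1+a)+2(1-a)\bigr)t-m(m+1)a\,t^2 .
\]
All coefficients of $P$ are nonpositive for $m\ge3$. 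So $f'$ decreases from $1$ to $a^{(n-2)/(n-1)}$, which gives $f''\le0$ and $0<f'\le1$. For $m=2$ one has $P(0)>0$, confirming your guess that $n=3$ is the borderline.

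So, as written, you have proved a variant of the theorem with $b$ in place of $r$. Parametrizing by $b$ buys a purely algebraic curvature check. Proving the literal statement requires the arclength parametrization and the paper's harder concavity estimate.
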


The construction is inverse: we prescribe $b'$ and then recover the warping
function.  The counterexamples show that regularity of \(|\nabla b|^2\)
alone does not directly imply global rigidity and must be combined with
curvature assumptions.  General Green-kernel asymptotics can be developed by
parametrix or isolated-singularity methods; see
\cite{Aubin,GilbargSerrin,Grigoryan}.  We then use Colding's third
monotonicity formula \eqref{eq:third-monotonicity} to obtain rigidity.

For a function $E$ on a punctured neighborhood of $p$, define
\[
 E=O_1(r^k)
 \quad\Longleftrightarrow\quad
 |E|\leq Cr^k\ \text{ and }\ |\nabla E|\leq Cr^{k-1}\quad\text{on}\quad
 M \setminus \{p\}.
\]
In particular, the derivative estimate is part of the notation; it is not
inferred by formally differentiating a bare $O(r^k)$ estimate.

\begin{theorem}
\label{thm:pure-pole}
Let $n\geq3$ be odd, let $p$ be a point of a smooth Riemannian manifold,
and assume \(\Ric\geq0\) near $p$.  If
\begin{equation}\label{eq:curvature-jet-conditions}
\begin{aligned}
 \nabla^{2i}\Scal(p)&=0
 &&(0\leq 2i\leq n-5),\\
 \left.\nabla^j\Rm\right|_p&=0
 &&\left(0\leq j\leq\frac{n-7}{2}\right),
\end{aligned}
\end{equation}
where the conditions are empty when $n=3$, and second condition is empty when $n=5$, then every Green function
normalized by \eqref{eq:green-normalization} has
an expansion
\begin{equation}\label{eq:pure-pole-expansion}
 G(p,x)=r^{2-n}+H_p+O_1(r)
\end{equation}
for a constant $H_p$.  In particular, $H_p\geq0$ when $\mathrm{Ric} \ge 0$ and
 $G(p,\cdot)$ is
the normalized minimal positive Green function.
\end{theorem}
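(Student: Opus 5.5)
We construct a parametrix in normal coordinates and then remove the remainder by solving a Poisson equation, just as for the Yamabe mass expansion (compare \cite{Aubin}). In geodesic normal coordinates $x$ centered at $p$, write $r=|x|$, $\Delta r^{2-n}$ and $\sqrt{\det g}=1-\tfrac16\Ric_{ij}x^ix^j+\dots$. The basic identity is
\[
 \Delta r^{2-n}=(2-n)r^{1-n}\,\partial_r\log\sqrt{\det g}\,
 =-(n-2)|\Sph^{n-1}|\delta_p+r^{1-n}\,\Theta(x),
\]
where $\Theta$ has a Taylor expansion whose degree-$k$ homogeneous part is built from $\nabla^{k-2}\Rm(p)$ and lower jets. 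The error $f_0=\Delta r^{2-n}+(n-2)|\Sph^{n-1}|\delta_p$ is then $O(r^{3-n})$. We build a correction $\phi=\sum_k \phi_k$, with $\phi_k$ homogeneous of degree $2-n+k$ times angular functions, by solving $\Delta_{\Euc}\phi_k=\text{(known terms)}$ degree by degree. For each $k$ we need to solve $\Delta_{\Euc}(r^{m}Y)=r^{m-2}F$ on the sphere. This is possible unless a spherical harmonic component of degree $\ell$ with $(m+\ell)(m+\ell+n-2)=\ldots$ resonates, i.e.\ unless the needed power $m=2-n+k$ coincides with $\ell$ or $2-n-\ell$. Resonance produces $r^{m}\log r$ terms.

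The key point is parity and order counting. We need the correction to remove all terms of order below $r^{1}$, i.e.\ $k\le n-2$. The degree-$k$ error only involves curvature jets of total order $k-2$. The hypotheses \eqref{eq:curvature-jet-conditions} kill $\Rm$ and its first $(n-7)/2$ derivatives. With those vanishing, every term that is at least quadratic in curvature has order $\ge 2\cdot\frac{n-5}{2}+4>n-2$ after accounting. So up to order $n-2$, only the terms linear in curvature survive. These are traces of $\nabla^{j}\Ric$ contracted with $x$'s, and by the contracted Bianchi identity their radial (degree-$0$ harmonic) parts reduce to $\nabla^{2i}\Scal(p)$.

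The dangerous resonance is exactly the constant (degree-$0$, power $r^0$) term, which would produce $\log r$ at $k=n-2$. Because $n$ is odd, $k=n-2$ is odd, so the corresponding Taylor data is an odd polynomial and has no $\ell=0$ component. Lower even orders can also give $\ell=0$ pieces at power $r^{2-n+k}$ with $2-n+k\ne0$; these are nonresonant and are solved by radial functions. Resonance in nonzero $\ell$ cannot occur for negative powers $m>2-n-\ell$. I expect the main obstacle to be making this bookkeeping precise: proving that with the stated jet conditions no $\log$ term and no non-constant $r^0$ term appears. I would first handle the cases $n=3,5$ explicitly, where one only needs $\Scal(p)=0$ to kill the $r^{4-n}$ radial term, and then do the general induction on $k$.

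Having $\tilde G=r^{2-n}(1+\phi)\chi$ with $\Delta\tilde G+(n-2)|\Sph^{n-1}|\delta_p=O(r^{-1})\in L^q$ for $q<n$, set $G-\tilde G=u$. Then $\Delta u\in L^q$ locally with $q$ close to $n$. Elliptic $W^{2,q}$ estimates and Morrey embedding give $u\in C^{1,\alpha}$ near $p$, so $u=u(p)+O_1(r)$. By construction all terms of $\tilde G$ other than $r^{2-n}$ are $O_1(r)$ except possibly constant pieces, which we absorb into $H_p=u(p)+\text{const}$. This gives \eqref{eq:pure-pole-expansion}. The derivative bound comes from $C^{1}$ of $u$ and from differentiating the explicit homogeneous terms.

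For $H_p\ge0$ under $\Ric\ge0$, we use that for the minimal Green function, $b=G^{1/(2-n)}$ satisfies $|\nabla b|\le1$ by Colding's gradient estimate. Alternatively, we compare $G$ with $r^{2-n}$: by Laplacian comparison $\Delta r^{2-n}\ge0$ away from $p$ in the distributional sense, so $r^{2-n}$ is subharmonic with the same pole. The minimal Green function is the limit of Dirichlet Green functions $G_R$ on exhausting balls, and the maximum principle applied to $G_R-r^{2-n}+R^{2-n}$ gives $G\ge r^{2-n}$. Hence $H_p=\lim_{x\to p}(G-r^{2-n})\ge0$.
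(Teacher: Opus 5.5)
Your outline has two genuine gaps, and both sit exactly where the paper's proof does its real work.

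\emph{First, $\Ric\geq0$ is never used to obtain the expansion.} You only check that the spherical averages ($\ell=0$ parts) of the linear terms $(\nabla^{k-2}\Ric)_p(x,\dots,x)$ reduce to scalar-curvature jets. You then treat the remaining components as harmless because they are nonresonant. They are not harmless. A nonresonant correction $r^{2-n+k}Y_\ell(\theta)$ with $2-n+k<0$ is still a singular term in $G$. A degree-$0$ term $Y_\ell(\theta)$ with $\ell\geq1$ has no limit at $p$. Either one contradicts $G=r^{2-n}+H_p+O_1(r)$.

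So you need $\Delta r-\frac{n-1}{r}=O(r^{n-2})$ in full. Given the $\Rm$-jet conditions, this means the radial Ricci jets $(\nabla^j_\theta\Ric)_p(\theta,\theta)$ must vanish for all $j\leq n-4$, not merely their averages. The scalar conditions alone do not give this. For $n=5$ with $\Scal(p)=0$ and $\Ric_p\neq0$ traceless, the parametrix contains $\tfrac1{12}\Ric_p(\theta,\theta)\,r^{-1}$, and the expansion fails. An argument that never uses $\Ric\geq0$ therefore cannot be correct.

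The paper closes this with \lemref{lem:scalar-to-Ricci-jets} and \remref{rem:odd-order-jets}. The function $t\mapsto\Ric(Y(t),Y(t))$ along $\gamma_\theta$ is nonnegative for both signs of $t$. Hence its first nonzero jet has even order and is a nonnegative form, and its trace is a scalar jet that vanishes by hypothesis.

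\emph{Second, the critical order is skipped.} You stop at $k\leq n-2$ and claim that an error $O(r^{-1})\in L^q$ with $q<n$ gives $u\in C^{1,\alpha}$. Morrey's embedding $W^{2,q}\subset C^{1,\alpha}$ requires $q>n$, and the conclusion is in fact false. The function $u=x_1\log|x|$ has $\Delta u=nx_1/|x|^2=O(r^{-1})$ but unbounded gradient, so $u\neq u(p)+O_1(r)$.

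The $r^{-1}$ error must be removed explicitly by a term $r\phi(\theta)$ with $(\Delta_{\Sph^{n-1}}+n-1)\phi=(n-2)a$. This equation is solvable iff $a\perp\cH_1$. Here $a$ is the coefficient of $r^{n-2}$ in $\Delta r$. It contains the radial Ricci jet $\rho(\theta)$ and a term quadratic in $\nabla^{(n-5)/2}\Rm(p)$, which your own count places exactly at this order.

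This is where the oddness of $n$ genuinely enters. $a$ comes from Taylor data of the even degree $n-1$, so $a$ is even, while $\cH_1$ consists of odd functions. Only after this correction is the remainder bounded, so that $W^{2,s}$ with $s>n$ gives $C^{1,\alpha}$.

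Your parity argument instead targets the $\ell=0$ resonance at homogeneity $0$. Under the correct hypotheses that resonance never arises, because the entire coefficient vanishes. For $n=3$ your scheme does nothing at all, since the first error $\tfrac13\Ric_p(\theta,\theta)\,r^{-1}$ is already critical.

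(Minor: the error $\Delta r^{2-n}+(n-2)|\Sph^{n-1}|\delta_p$ is $O(r^{2-n})$, not $O(r^{3-n})$.) The final step $H_p\geq0$ via $|\nabla b|\leq1$ is fine and is the paper's argument.
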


The proof is local.  If $S_\theta=r^{-1}\Id+B_\theta$ is the shape operator of a small
geodesic sphere, the trace Riccati identity gives
\[
 \tr B(r,\theta)
 =-r^{-2}\int_0^r t^2
 \bigl(|B(t,\theta)|^2+\Ric(\partial_r,\partial_r)\bigr)\dd t.
\]
The curvature conditions in \eqref{eq:curvature-jet-conditions},
together with nonnegative Ricci curvature, imply the radial estimates needed
in the Jacobi equation and hence
\[
 \Delta r=\frac{n-1}{r}+a(\theta)r^{n-2}+O(r^{n-1}),
\]
where $a$ is smooth and even on \(\Sph^{n-1}\).  Consequently
\(\Delta r^{2-n}\) has a critical $r^{-1}$ error.  The correcting term is
$r\phi(\theta)$.  Its spherical equation is solvable because the kernel
at this homogeneity consists of the odd degree-one spherical harmonics,
whereas the source $a$ is even.

Combining the local theorem with Colding's monotonicity formula
\eqref{eq:third-monotonicity} gives the positive rigidity statement.

\begin{theorem}\label{thm:odd-rigidity}
Let $n\geq3$ be odd and let \((M^n,g)\) be complete and nonparabolic with
\(\Ric\geq0\).  Let $G(p,\cdot)$ be the normalized minimal positive Green
function and set $b=G^{1/(2-n)}$.  Assume
\eqref{eq:curvature-jet-conditions} at $p$.  If
\begin{equation}\label{eq:gradb-regularity}
 |\nabla b|^2\text{ extends to a }C^{n-2}\text{ function near }p,
\end{equation}
then \((M^n,g)\) is isometric to \((\R^n,g_{\Euc})\).
\end{theorem}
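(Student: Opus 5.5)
The plan is to combine the local expansion of \thmref{thm:pure-pole} with the parity of $n$: the regularity \eqref{eq:gradb-regularity} will force $H_p=0$. A maximum-principle argument based on Laplacian comparison then turns $H_p=0$ into $G(p,\cdot)=r^{2-n}$, and equality in Laplacian comparison gives the Euclidean metric. Colding's third monotonicity formula \eqref{eq:third-monotonicity} could replace this last step, but the comparison argument seems more direct.

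\emph{Step 1: expansion of $|\nabla b|^2$.} Since $M$ satisfies \eqref{eq:curvature-jet-conditions} and $\Ric\ge0$, \thmref{thm:pure-pole} gives $G=r^{2-n}+H_p+E$ with $E=O_1(r)$. Writing $G=r^{2-n}\bigl(1+H_pr^{n-2}+r^{n-2}E\bigr)$, we get
\[
 b=r\bigl(1+H_pr^{n-2}+r^{n-2}E\bigr)^{-1/(n-2)}=r-\tfrac{H_p}{n-2}\,r^{n-1}+F,
\]
where $F=O_1(r^n)$. The derivative bound uses $|\nabla(r^{n-1}E)|\le Cr^{n-1}$, which is why the $O_1$ notation matters. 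Since $|\nabla r|=1$ away from the cut locus, which is empty near $p$, it follows that
\[
 |\nabla b|^2=1-\tfrac{2(n-1)}{n-2}H_p\,r^{n-2}+O(r^{n-1}).
\]

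\emph{Step 2: parity forces $H_p=0$.} Work in normal coordinates $x$ at $p$, so that $r=|x|$. By Taylor's theorem, a $C^{n-2}$ function $f$ satisfies $f=\sum_{k\le n-2}P_k(x)+o(|x|^{n-2})$ with $P_k$ homogeneous polynomials of degree $k$. Comparing with Step 1 along rays $x=tv$, the terms of degree $k<n-2$ must vanish inductively, and $P_{n-2}(v)=-cH_p$ for every unit $v$, with $c=2(n-1)/(n-2)$. Because $n$ is odd, $n-2$ is odd, so $P_{n-2}(-v)=-P_{n-2}(v)$. Hence $-cH_p=cH_p$, and so $H_p=0$. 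This is exactly where oddness enters, and it also explains why $C^{n-2}$ is the right threshold.

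\emph{Step 3: from $H_p=0$ to Euclidean space.} Set $u=G-r^{2-n}$ on $M\setminus\{p\}$. Laplacian comparison $\Delta r\le (n-1)/r$ holds in the distributional (barrier) sense on all of $M$ because $\Ric\ge0$. It gives
\[
 \Delta r^{2-n}=(2-n)r^{1-n}\bigl(\Delta r-\tfrac{n-1}{r}\bigr)\ge0
\]
away from $p$. Since $G$ is harmonic there, $u$ is superharmonic on $M\setminus\{p\}$. Near $p$, $u=H_p+O(r)$ is bounded, so the singularity is removable for superharmonicity: both $G$ and $r^{2-n}$ carry the same delta mass $(n-2)|\Sph^{n-1}|\delta_p$, so $-\Delta u\ge0$ distributionally on $M$. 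At infinity, $u\ge -r^{2-n}\to0$. The minimum principle on exhausting balls, letting the radius go to infinity, then gives $u\ge0$. Now $u(p)=H_p=0$ is an interior minimum, so the strong minimum principle for superharmonic functions gives $u\equiv0$, that is, $G=r^{2-n}$. (This also recovers $H_p\ge0$ in \thmref{thm:pure-pole}.)

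Consequently $\Delta r^{2-n}=0$, so $\Delta r=(n-1)/r$ holds on $M\setminus\{p\}$ in the distributional sense. Equality in Laplacian comparison means the volume ratio $\Vol(B_p(r))/(\omega_nr^n)$ is constant, equal to $1$. Bishop--Gromov rigidity then gives that $(M,g)$ is isometric to $\R^n$.

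The step I expect to need the most care is the rigorous form of Step 3. The superharmonicity must hold across the cut locus, which I would handle with Calabi's barrier form of Laplacian comparison. The minimum principle must also be applied with only liminf control at infinity, which works because $G>0$ and $r^{2-n}\to0$. If that becomes delicate, the fallback is Colding's third monotonicity formula \eqref{eq:third-monotonicity}, using $H_p=0$ to identify its limits at $0$ and at $\infty$ and reading off $\Hess b^2=2g$.
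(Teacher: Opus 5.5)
Your proposal is correct, and Steps 1--2 match the paper's \lemref{lem:gradb-expansion}. Step 3 takes a genuinely different route.

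\textbf{The paper's route.} After $H_p=0$, it converts $|\nabla b|^2=1+O(r^{n-1})$ into $A(s)-|\Sph^{n-1}|=O(s^{n-1})$. It combines this with the monotonicity of $A$ and of $s^{3-n}A'(s)$ to get $\lim_{s\downarrow0}s^{3-n}A'(s)=0$. It then applies the third monotonicity formula \eqref{eq:third-monotonicity} to force $\cD\equiv0$, and concludes with Colding's rigidity statement in \propref{prop:Colding}(i).

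\textbf{Your route.} You replace all of this with a classical potential-theoretic argument.
\begin{itemize}
\item Laplacian comparison makes $u=G-r^{2-n}$ superharmonic off $p$.
\item The expansion of \thmref{thm:pure-pole}, including the $O_1$ gradient bound, makes $u$ continuous at $p$ with zero flux there. So $u$ is superharmonic on all of $M$.
\item The minimum principle on exhausting balls, using $G>0$, gives $u\ge0$.
\item $H_p=u(p)=0$ is then an interior minimum, so $G\equiv r^{2-n}$.
\end{itemize}

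\textbf{What your route buys.} It needs no Colding machinery: no $A(s)$, no $\cD$, no $|\nabla b|\le1$. It recovers $H_p\ge0$ independently. It also shows that vanishing of the constant term $H_p$ alone is the rigid case, a positive-mass-type statement, without using the improved $O(r^{n-1})$ decay of $|\nabla b|^2-1$.

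\textbf{What it costs.} You must justify Laplacian comparison in the barrier or distributional sense across the cut locus, and you must handle the equality case. For the equality case, the quickest closing is to note that $G=r^{2-n}$ makes $r=G^{-1/(n-2)}$ smooth on $M\setminus\{p\}$. Hence the cut locus of $p$ is empty, and $\Delta r=(n-1)/r$ holds pointwise, giving $\Vol(B_p(R))=\omega_nR^n$ for all $R$. Alternatively, $b\equiv r$ gives $|\nabla b|\equiv1$, and Bochner's formula applied to $b^2$ yields $\cD\equiv0$, so \propref{prop:Colding}(i) applies.

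The paper's route instead outsources all the global analysis to Colding's formulas, at the price of relying on those deeper results.
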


The role of the regularity hypothesis is seen from
\eqref{eq:pure-pole-expansion},
\begin{equation}\label{eq:gradb-intro-expansion}
 |\nabla b|^2
 =1-\frac{2(n-1)}{n-2}H_pr^{n-2}+O(r^{n-1}).
\end{equation}
For odd $n$, the exponent $n-2$ is odd, and
\eqref{eq:gradb-regularity} forces $H_p=0$, and therefore
\[
 |\nabla b|^2=1+O(r^{n-1}).
\]
This asymptotic expansion yields the vanishing of the boundary term in
Colding's third monotonicity formula \eqref{eq:third-monotonicity}, which in
turn leads to the rigidity conclusion.

\begin{unnumberedremark}
The regularity threshold is sharp, as
\corref{cor:odd-sharpness} shows: even when the metric is Euclidean in a
neighborhood of the pole, for odd $n\geq3$ the weaker condition
\(
 |\nabla b|^2\in C^{n-3,1}
\)
no longer guarantees Euclidean rigidity.
\end{unnumberedremark}

Let \((M^n,g)\) be a complete Riemannian manifold, nonparabolic, with
\(\Ric\geq0\).
The low-dimensional consequences include
\corref{cor:dimension-three}: \(|\nabla b|^2\in C^1\) implies
  \((M^3,g)\cong\R^3\);
\corref{cor:dimension-five}: \(\Scal(p)=0\) and
  \(|\nabla b|^2\in C^3\) imply
  \((M^5,g)\cong\R^5\); and
\corref{cor:dimension-seven}: \(\Rm(p)=0\),
  \(\Delta\Scal(p)=0\), and
  \(|\nabla b|^2\in C^5\) imply \((M^7,g)\cong\R^7\).

The paper is organized as follows.  Section~\ref{sec:colding} records the
part of Colding's theory used later.  Section~\ref{sec:counterexamples}
proves \thmref{thm:smooth-counterexamples}.  Section~\ref{sec:jacobi}
proves \thmref{thm:pure-pole}. Section~\ref{sec:global} proves \thmref{thm:odd-rigidity}, while Section~\ref{sec:even} establishes the sharpness of the regularity condition and presents even-dimensional obstructions.

\section{Green functions and Colding's formulas}\label{sec:colding}

We use the sign convention
\[
 \Delta_{\Sph^{n-1}}Y_k=-k(k+n-2)Y_k
\]
for a spherical harmonic of degree $k$.  From $G=b^{2-n}$ and
\(\Delta G=0\) on $M\setminus\{p\}$, one obtains
\begin{equation}\label{eq:Delta-b2-section2}
 \Delta b^2=2n|\nabla b|^2.
\end{equation}

For a function $v\in C^1(M\setminus\{p\})$, Colding
\cite{Colding} considered
\begin{equation}\label{eq:Iv}
 I_v(s)=s^{1-n}\int_{\{b=s\}}v|\nabla b|\dd\sigma,
\end{equation}
and showed
\begin{equation}\label{eq:Iv-prime}
 I_v'(s)=s^{1-n}\int_{\{b=s\}}v_n\dd\sigma.
\end{equation}
Taking $v\equiv1$ and using the Euclidean pole limit
gives
\begin{equation}\label{eq:I1}
 I_1(s)=|\Sph^{n-1}|.
\end{equation}

Colding's area functional is
\begin{equation}\label{eq:A-def}
 A(s)=s^{1-n}\int_{\{b=s\}}|\nabla b|^3\dd\sigma
 =I_{|\nabla b|^2}(s).
\end{equation}
Set
\begin{equation}\label{eq:defect-density}
 \cD=
 \left|\Hess b^2-\frac{\Delta b^2}{n}g\right|^2
 +\Ric(\nabla b^2,\nabla b^2).
\end{equation}

We will use the following results from \cite{Colding}.

\begin{proposition}[Colding]\label{prop:Colding}
Let \((M^n,g)\) be a complete nonparabolic Riemannian manifold with
$n\geq3$ and nonnegative Ricci curvature $\Ric\geq0$. Then:
\begin{enumerate}[label=\textup{$\roman*$}]
\item for \(r>0\), if
  \(\cD\equiv0\) on
  \[
    \Omega_r\setminus\{p\},
    \qquad \Omega_r:=\{x\in M:b(x)\leq r\},
  \]
  then the closed set \(\Omega_r=\{x\in M:b(x)\leq r\}\), with its
  intrinsic metric, is isometric to the closed Euclidean ball
  \(\overline B_r(0^n)\subset\R^n\).  In particular, if this vanishing holds for
  arbitrarily large \(r\), then \(M\) is isometric to \(\R^n\);
\item $A$ is nonincreasing;
\item for $0<s_1<s_2$,
\begin{equation}\label{eq:third-monotonicity}
 s_2^{3-n}A'(s_2)-s_1^{3-n}A'(s_1)
 =\frac12\int_{\{s_1\leq b\leq s_2\}}\cD\,b^{2-2n}\dd\mu;
\end{equation}
\item $|\nabla b|\leq1$;
\item as $x\to p$ and $s\downarrow0$,
\begin{equation}\label{eq:pole-limits}
 \frac{b(x)}{r(x)}\longrightarrow1,
 \qquad |\nabla b|^2(x)\longrightarrow1,
 \qquad A(s)\longrightarrow|\Sph^{n-1}|.
\end{equation}
\end{enumerate}
\end{proposition}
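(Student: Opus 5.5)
Since this proposition collects results of \cite{Colding}, the plan is to reprove each item by Colding's method. Throughout I work with $u=b^2$, which on $M\setminus\{p\}$ satisfies $\Delta u=2n|\nabla b|^2$ by \eqref{eq:Delta-b2-section2} and $|\nabla u|^2=4u|\nabla b|^2$.

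I would start with item $v$, the pole asymptotics, because everything else needs boundary behaviour at $p$. Near $p$ the Green function has the standard local parametrix expansion $G=r^{2-n}(1+o(1))$, with the corresponding gradient asymptotics. This gives $b/r\to1$ and $|\nabla b|\to1$. Inserting these into \eqref{eq:A-def} and using $I_1\equiv|\Sph^{n-1}|$ from \eqref{eq:I1} gives $A(s)\to|\Sph^{n-1}|$.

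For item $iv$, I would use Colding's gradient estimate. The Bochner formula, together with $\Ric\ge0$ and the identity $\Delta b^2=2n|\nabla b|^2$, shows that $|\nabla b|^2$ satisfies an elliptic inequality of the form $\Delta_f|\nabla b|^2\ge0$, with drift $f=(2-n)\log b$ (equivalently, weight $b^{2-n}$). Its boundary value at the pole is $1$. At infinity, $|\nabla b|$ is controlled by Cheng--Yau/Li--Yau gradient bounds for $G$. A maximum principle argument on $\{s_1\le b\le s_2\}$, letting $s_1\to0$ and $s_2\to\infty$, then yields $|\nabla b|\le1$.

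For items $iii$ and $ii$, the key computation is the derivative of $A$. Applying \eqref{eq:Iv-prime} with $v=|\nabla b|^2$ gives
\[
A'(s)=s^{1-n}\int_{\{b=s\}}\langle\nabla|\nabla b|^2,\mathbf n\rangle\,\dd\sigma .
\]
I would multiply by $s^{3-n}$, rewrite the integrand in terms of $u$ (so that it becomes a flux of a weighted vector field built from $\Hess u(\nabla u,\cdot)$ and $|\nabla u|^2\nabla u$, with the weight $b^{2-2n}$ absorbed using $b=s$ on the level set), and then apply the divergence theorem on $\{s_1\le b\le s_2\}$. The divergence of that vector field should be computed with Bochner's formula $\tfrac12\Delta|\nabla u|^2=|\Hess u|^2+\langle\nabla u,\nabla\Delta u\rangle+\Ric(\nabla u,\nabla u)$. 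Using $\Delta u=2n|\nabla b|^2=\tfrac{n}{2}|\nabla u|^2/u$ to cancel the cross terms, this should leave exactly $\tfrac12\cD\,b^{2-2n}$, which gives \eqref{eq:third-monotonicity}.

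Consequently $s^{3-n}A'(s)$ is nondecreasing. Item $ii$ then follows once one shows $\liminf_{s\to\infty}s^{3-n}A'(s)\le0$. Alternatively, and more directly, one can integrate the flux of $\nabla|\nabla b|^2$ over $\{b\ge s\}$ using the subsolution property from item $iv$.

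For item $i$, assume $\cD=0$. Then $\Hess u=\frac{\Delta u}{n}g=\varphi g$ with $\varphi=2|\nabla b|^2$, and also $\Ric(\nabla u,\nabla u)=0$. Taking the divergence of $\Hess u$ and using the commutation identity $\operatorname{div}\Hess u=\nabla\Delta u+\Ric(\nabla u,\cdot)$ gives
\[
\nabla\varphi=n\nabla\varphi+\Ric(\nabla u,\cdot),
\qquad\text{that is,}\qquad
(n-1)\nabla\varphi=-\Ric(\nabla u,\cdot).
\]
Since $\Ric\ge0$ and $\Ric(\nabla u,\nabla u)=0$, we get $\Ric(\nabla u,\cdot)=0$, so $\varphi$ is constant. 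By item $v$ that constant is $2$, hence $\Hess u=2g$ and $|\nabla b|\equiv1$. Then $b$ is the distance from $p$ on $\Omega_r$, and the exponential map at $p$ is an isometry from $\overline B_r(0)$ onto $\Omega_r$, because along geodesics $t\mapsto t\theta$ the Jacobi fields are linear. If this holds for arbitrarily large $r$, then $M\cong\R^n$.

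I expect the main obstacle to be the exact divergence identity behind \eqref{eq:third-monotonicity}: choosing the correct weighted vector field so that the Bochner terms collapse precisely to $\tfrac12\cD\,b^{2-2n}$ with no leftover terms. A secondary difficulty is controlling the boundary terms at $p$ and at infinity in the proofs of items $ii$ and $iv$.
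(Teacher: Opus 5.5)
The paper does not prove this proposition; it quotes it from \cite{Colding}, so your sketch has to stand on its own. Items i, iii and v are sound in outline. The obstacle you flag in iii disappears once the weight is right. Your own Bochner computation with $u=b^2$ gives exactly
\[
 u\,\Delta|\nabla b|^2+(2-n)\langle\nabla u,\nabla|\nabla b|^2\rangle=\tfrac12\cD,
 \qquad\text{equivalently}\qquad
 \operatorname{div}\bigl(b^{4-2n}\nabla|\nabla b|^2\bigr)=\tfrac12\cD\,b^{2-2n}.
\]
Moreover $s^{3-n}A'(s)=\int_{\{b=s\}}b^{4-2n}\langle\nabla|\nabla b|^2,\mathbf n\rangle\dd\sigma$ by \eqref{eq:Iv-prime}. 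So the relevant weight is $b^{4-2n}=G^2$, not $b^{2-n}$ as you assert in iv. With weight $b^{2-n}$, the Bochner identity leaves the extra term $(n-2)b^{-1}\langle\nabla b,\nabla|\nabla b|^2\rangle$, which has no sign, so your inequality $\Delta_f|\nabla b|^2\geq0$ does not follow.

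The genuine gap is the step at infinity in item iv. The maximum principle on $\{s_1\le b\le s_2\}$ only yields $|\nabla b|^2\le\max\{1,\limsup_{s_2\to\infty}\max_{\{b=s_2\}}|\nabla b|^2\}$. Cheng--Yau, combined with Li--Yau and volume comparison, gives only $|\nabla b|\le C(n)$ there, with no reason for $C(n)\le1$. Sending $s_2\to\infty$ therefore proves boundedness, not the sharp bound. What is missing is a mechanism that makes infinity negligible using boundedness alone.

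With the correct operator $G^{-2}\operatorname{div}(G^2\nabla\,\cdot\,)$ there is a natural barrier. The function $w=b^{n-2}=G^{-1}$ satisfies $\operatorname{div}(G^2\nabla w)=-\Delta G=0$, and $w\to\infty$ at infinity since $G\to0$. Apply the maximum principle to $|\nabla b|^2-\varepsilon w$ on $\{s_1\le b\le s_2\}$, with $s_2$ so large that $C(n)^2<1+\varepsilon s_2^{n-2}$. Then let $s_1\to0$ and $\varepsilon\to0$; this gives $|\nabla b|\le1$. An integral alternative also works: for $\beta\ge2$, $s\mapsto s^{1-n}\int_{\{b=s\}}|\nabla b|^{\beta+1}$ is nonincreasing by the same flux argument, and letting $\beta\to\infty$ gives the sup bound.

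The same missing ingredient is why item ii is left open. You need $\lim_{s\to\infty}s^{3-n}A'(s)\le0$. This holds because a limit $L>0$ would give $A'(s)\ge\frac L2s^{n-3}$ for large $s$. That forces $A\to\infty$, contradicting $A\le|\Sph^{n-1}|\sup|\nabla b|^2$. This is the same kind of device the paper uses near the pole in \lemref{lem:gradb-expansion}. Your suggested alternative of integrating the flux over $\{b\ge s\}$ only moves that same boundary term to infinity.
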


\begin{lemma}\label{lem:zero-defect}
In the setting of \propref{prop:Colding}, suppose that $s^{3-n}A'(s)$
satisfies
\begin{equation}\label{eq:zero-small-defect}
 \lim_{s\downarrow0}s^{3-n}A'(s)=0.
\end{equation}
Then \((M^n,g)\) is isometric to Euclidean space.
\end{lemma}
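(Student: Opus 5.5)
The plan is to let $s_1\downarrow0$ in Colding's third monotonicity formula \eqref{eq:third-monotonicity} and then use monotonicity of $A$ to force the right-hand side to vanish.

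Fix $s_2=s>0$ and let $s_1\downarrow0$ in \eqref{eq:third-monotonicity}. Since $\cD\geq0$ (because $\Ric\geq0$) and $b^{2-2n}>0$, the integrand on the right is nonnegative. The integral over $\{s_1\leq b\leq s\}$ is therefore nondecreasing as $s_1\downarrow0$, and by monotone convergence it tends to $\frac12\int_{\{0<b\leq s\}}\cD\,b^{2-2n}\dd\mu\in[0,\infty]$. On the left, hypothesis \eqref{eq:zero-small-defect} kills the $s_1$ term. This gives, for every $s>0$,
\[
 s^{3-n}A'(s)=\frac12\int_{\{0<b\leq s\}}\cD\,b^{2-2n}\dd\mu\geq0 .
\]
In particular the integral is finite.

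Next combine this with Proposition~\ref{prop:Colding}(ii). Since $A$ is nonincreasing, $A'(s)\leq0$ wherever the derivative exists. The formula \eqref{eq:third-monotonicity} presupposes that $A$ is differentiable for $s>0$, which is Colding's setting, so I take $A'$ to exist everywhere on $(0,\infty)$. Hence $s^{3-n}A'(s)\leq0$, and together with the display above this gives $\int_{\{0<b\leq s\}}\cD\,b^{2-2n}\dd\mu=0$ for every $s$. The integrand is nonnegative and continuous on $M\setminus\{p\}$, where $b$ is smooth, so $\cD\equiv0$ on $\Omega_s\setminus\{p\}$ for every $s>0$.

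Finally, apply Proposition~\ref{prop:Colding}(i). The set $\Omega_s$ is isometric to a Euclidean ball for arbitrarily large $s$. Moreover, the sublevel sets $\Omega_s$ exhaust $M$, since $b\to\infty$ at infinity; this follows from $G\to0$ at infinity for the minimal Green function under $\Ric\geq0$ and nonparabolicity. Hence $M\cong\R^n$. The only delicate points are justifying the limit $s_1\downarrow0$, which monotone convergence handles, and the differentiability of $A$ needed to read off the sign of $A'$; neither is a serious obstacle given the cited results.
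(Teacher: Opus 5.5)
Your argument is correct and follows the paper's proof. You let $s_1\downarrow0$ in \eqref{eq:third-monotonicity} to get $s^{3-n}A'(s)=\frac12\int_{\{b\leq s\}}\cD\,b^{2-2n}\dd\mu\geq0$, combine this with $A'\leq0$ from \propref{prop:Colding}(ii) to force $\cD\equiv0$ on every $\Omega_s\setminus\{p\}$, and conclude with \propref{prop:Colding}(i). The monotone-convergence justification and the exhaustion remark are harmless additions; the exhaustion is already built into the ``in particular'' clause of \propref{prop:Colding}(i).
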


\begin{proof}
By \eqref{eq:third-monotonicity},
\[
 s_2^{3-n}A'(s_2)-s_1^{3-n}A'(s_1)
 =\frac12\int_{\{s_1\leq b\leq s_2\}}\cD\,b^{2-2n}\dd\mu.
\]
Letting $s_1\to0$ gives
\[
 s_2^{3-n}A'(s_2)
 =\frac12\int_{\{b\leq s_2\}}\cD\,b^{2-2n}\dd\mu,
\]
and hence
\[
 A'(s_2)
 =\frac12s_2^{n-3}\int_{\{b\leq s_2\}}\cD\,b^{2-2n}\dd\mu\geq0.
\]
Thus $A(s)$ is nondecreasing.  By \propref{prop:Colding}(ii), $A(s)$ is
also nonincreasing.  Hence $A'(s)=0$.  By
\eqref{eq:third-monotonicity}, \(\cD=0\) on
\(\{x\in M:b\leq s_2\}\).  Since $s_2$ is arbitrary,
\propref{prop:Colding}(i) gives \(M\cong\R^n\).
\end{proof}

\section{Smooth pole regularity does not imply rigidity}
\label{sec:counterexamples}

Consider a model metric
\begin{equation}\label{eq:model-metric}
 g=\dd r^2+f(r)^2g_{\Sph^{n-1}},\qquad 0\leq r<\infty,
\end{equation}
with $f$ extends smoothly and oddly across $0$, and $f'(0)=1$.  Its radial and tangential sectional
curvatures are
\begin{equation}\label{eq:model-curvatures}
 K_{\mathrm{rad}}=-\frac{f''}{f},
 \qquad
 K_{\mathrm{tan}}=\frac{1-(f')^2}{f^2}.
\end{equation}
The normalized radial minimal positive Green function, when the integral
converges, is
\begin{equation}\label{eq:model-green}
 G(r)=(n-2)\int_r^\infty f(s)^{1-n}\dd s.
\end{equation}
If $b=G^{-1/(n-2)}$, differentiation gives
\begin{equation}\label{eq:b-prime-model}
 b'=\left(\frac{b}{f}\right)^{n-1}.
\end{equation}

\begin{lemma}\label{lem:inverse-construction}
Let $u:[0,\infty)\to(0,\infty)$ be smooth and even near zero, with
$u(0)=1$.  Set
\begin{equation}\label{eq:inverse-def}
 b(r)=\int_0^r u(s)\dd s,
 \qquad
 f(r)=b(r)u(r)^{-1/(n-1)}.
\end{equation}
Assume that $f$ is positive for $r>0$, extends smoothly and oddly across
zero, satisfies $f''\leq0$, and has $f'>0$.  If $b(r)\to\infty$, then
\eqref{eq:model-metric} is complete with \(\sec\geq0\), the manifold is
nonparabolic, its normalized minimal positive Green function is
\(G=b^{2-n}\), and
\begin{equation}\label{eq:gradb-inverse}
 |\nabla b|^2=u^2.
\end{equation}
\end{lemma}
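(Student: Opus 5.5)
The proof is a sequence of direct verifications for the model metric \eqref{eq:model-metric} with $f$ given by \eqref{eq:inverse-def}.

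\textbf{Regularity and completeness.} By hypothesis $f$ extends smoothly and oddly across $0$. Since $u$ is even with $u(0)=1$, we have $b(r)=r+O(r^3)$, and hence $f'(0)=u(0)^{1-1/(n-1)}=1$. So the metric is smooth at the pole, in the standard sense for rotationally symmetric metrics. Completeness holds because $r$ is the distance from the pole and ranges over all of $[0,\infty)$, so closed bounded sets are compact.

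\textbf{Curvature.} By \eqref{eq:model-curvatures}, $K_{\mathrm{rad}}=-f''/f\geq0$ because $f''\le 0$ and $f>0$. For $K_{\mathrm{tan}}$, note that $f'$ is nonincreasing with $f'(0)=1$, so $f'\le 1$; together with $f'>0$ this gives $0<f'\le1$ and hence $1-(f')^2\ge0$. Every 2-plane at a point of a warped product over an interval is spanned by a radial-type and tangential-type decomposition, and its sectional curvature is a convex combination of $K_{\mathrm{rad}}$ and $K_{\mathrm{tan}}$ (the curvature operator is diagonal in the frame $\partial_r\wedge e_i$, $e_i\wedge e_j$). Therefore $\sec\ge0$.

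\textbf{Green function.} From \eqref{eq:inverse-def}, $(b/f)^{n-1}=u=b'$. Thus $b$ solves \eqref{eq:b-prime-model}, and
\[
\frac{d}{dr}\,b^{2-n}=(2-n)b^{1-n}b'=(2-n)f^{1-n}.
\]
Integrating from $r$ to $\infty$ and using $b\to\infty$ gives $b^{2-n}(r)=(n-2)\int_r^\infty f^{1-n}\dd s$. In particular this integral converges, and $G:=b^{2-n}$ coincides with \eqref{eq:model-green}.

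Next we check that $G$ is harmonic away from $p$. In the model metric,
\[
\Delta G=f^{1-n}\bigl(f^{n-1}G'\bigr)'=f^{1-n}\bigl(-(n-2)\bigr)'=0.
\]
Near the pole, $b\sim r$ gives $G=r^{2-n}(1+o(1))$. The flux $-\int_{\partial B_r}\partial_rG\,\dd\sigma$ equals $(n-2)|\Sph^{n-1}|$, which yields the normalization \eqref{eq:green-normalization}.

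Since $G$ is a positive Green function tending to $0$ at infinity, $M$ is nonparabolic. Minimality is the main point requiring care. The minimal Green function $G_{\min}$ satisfies $G_{\min}\le G$, and $G-G_{\min}$ is a nonnegative harmonic function on all of $M$ (the singularities cancel). I would argue via the exhaustion construction: $G_{\min}=\lim G_R$, where $G_R$ is the Dirichlet Green function on $B_R$. By uniqueness and rotational invariance, $G_R$ is radial and explicitly equals $(n-2)\int_r^R f^{1-n}\dd s$. This converges to $G$ as $R\to\infty$, so $G_{\min}=G$.

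Finally, since $b$ is radial, $|\nabla b|^2=(b')^2=u^2$, which is \eqref{eq:gradb-inverse}. The only genuinely delicate step is the identification of $G$ with the minimal Green function, and the explicit radial Dirichlet Green functions handle it.
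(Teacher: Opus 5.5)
Your proof is correct and follows essentially the same route as the paper. You integrate $\frac{\dd}{\dd r}b^{2-n}=(2-n)f^{1-n}$ from $r$ to $\infty$, get minimality from the explicit radial Dirichlet Green functions $G_R=(n-2)\int_r^R f^{1-n}\dd s$ increasing to $b^{2-n}$, and use $0<f'\le1$ from concavity for the curvature signs. Your extra verifications (harmonicity and flux normalization of $G$, and the diagonal curvature operator to handle mixed $2$-planes) are details the paper leaves implicit.
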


\begin{proof}
Since $u>0$, the function $b$ is increasing and $f>0$ for $r>0$.
The identity $f=bu^{-1/(n-1)}$ is equivalent to
\eqref{eq:b-prime-model}.  Therefore
\[
 \frac{\dd}{\dd r}b^{2-n}=-(n-2)f^{1-n}.
\]
Because $b\to\infty$, integration from $r$ to infinity gives
\eqref{eq:model-green}.  The normalization follows from
\(b(r)=r+O(r^3)\) near zero.

For completeness, we verify minimality rather than only harmonicity.  Let
$G_R$ be the Dirichlet Green function with pole at the origin in the model
ball $B_R$.  Rotational invariance and uniqueness give
\[
 G_R(r)=(n-2)\int_r^R f(s)^{1-n}\dd s.
\]
Thus $G_R\uparrow b^{2-n}$ as $R\to\infty$.  By the monotone-exhaustion
characterization, $b^{2-n}$ is the minimal positive Green function; in
particular, the manifold is nonparabolic.

Concavity and $f'(0)=1$ give $0<f'\leq1$.  Both expressions in
\eqref{eq:model-curvatures} are therefore nonnegative.  Completeness follows
because the radial coordinate has infinite length.  Finally, $b$ is radial
and \(|\nabla b|^2=(b')^2=u^2\).
\end{proof}

Fix $a\in(0,1)$, write $m=n-1$, and define
\begin{equation}\label{eq:explicit-family}
 u_a(r)=\frac{1+ar^2}{1+r^2},
 \qquad
 b_a(r)=ar+(1-a)\arctan r,
 \qquad
 f_a(r)=b_a(r)u_a(r)^{-1/m}.
\end{equation}
Then $b_a'=u_a$.

\begin{proposition}
\label{prop:concavity}
If $m\geq3$, then $f_a'>0$ and $f_a''\leq0$ on \([0,\infty)\).
Moreover, $f_a''<0$ somewhere.
\end{proposition}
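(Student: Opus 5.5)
The plan is to work directly with the explicit formulas. Write $c=1-a\in(0,1)$ and $w=(1+r^2)^{-1}$, so that
\[
u=a+cw,\qquad u'=-2crw^2,\qquad u''=2cw^3(3r^2-1),\qquad b'=u .
\]

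\emph{Sign of $f_a'$.} Logarithmic differentiation of $f=bu^{-1/m}$ gives
\[
\frac{f'}{f}=\frac{u}{b}-\frac1m\frac{u'}{u}
=\frac ub+\frac{2crw^2}{mu}.
\]
For $r>0$ both terms are positive, since $b>0$, $u>0$ and $u'<0$. Hence $f'>0$ on $(0,\infty)$, and $f'(0)=1$.

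\emph{Reducing $f_a''\le0$ to one inequality.} Differentiating $f'=u^{1-1/m}-\frac1m\,b\,u^{-1-1/m}u'$ once more and multiplying by the positive factor $m\,u^{1+1/m}$ gives
\[
m\,u^{1+1/m}f''=(m-2)\,u\,u'+b\Bigl[\bigl(1+\tfrac1m\bigr)\frac{(u')^2}{u}-u''\Bigr].
\]
So it suffices to show that the right-hand side is nonpositive. The first term is $\le0$ because $m\ge3$ and $u'<0$.

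\emph{Large $r$.} The bracket equals $2cw^3\bigl[\,2(1+\tfrac1m)\,c\,r^2w/u-(3r^2-1)\,\bigr]$. Since $cw/u<1$, the bracket is $\le0$ as soon as $3r^2-1\ge 2(1+\tfrac1m)\,r^2 c w/u$. This certainly holds once $r^2\ge1$, because $2(1+\tfrac1m)\le\tfrac83$. In that region every term is $\le0$ and the first is $<0$, so $f''<0$ there. This already gives the "somewhere strictly" clause.

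\emph{Small $r$.} Here the bracket is positive and must be beaten by $(m-2)uu'$. Since $b\le r$ and $\arctan r\le r$, it suffices to prove
\[
(m-2)\,u\,|u'|\;\ge\; r\Bigl[\bigl(1+\tfrac1m\bigr)\frac{(u')^2}{u}-u''\Bigr]_+ .
\]
After dividing by $2crw^3$ and multiplying by $u$, this becomes a polynomial inequality in $t=r^2\in[0,1]$ with coefficients depending on $a$ and $m$. I would verify it by clearing denominators and checking the sign of each coefficient, using $0<a<1$.

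For $m\ge4$ there is slack at leading order. As $r\to0$ the left side is about $2(m-2)cr$ and the right side about $2cr$, so there is room to spare.

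\emph{Main obstacle: $m=3$, $r\to0$.} Here the leading terms cancel exactly, both being $2cr$ to first order, and the crude bound $b\le r$ loses. For this case I would instead use the sharper bound
\[
b=ar+c\arctan r\le r-\tfrac c3 r^3+\tfrac c5 r^5 ,
\]
and expand both sides to order $r^3$. The $r^3$ coefficient should come out with the correct sign, with the $-\tfrac c3r^3$ correction from $\arctan$ providing the needed gain. I would then confirm the resulting polynomial inequality on all of $[0,1]$, either by the same coefficient-sign argument or, if necessary, by splitting $[0,1]$ into a couple of subintervals.

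Combining the two regions gives $f_a''\le0$ on $[0,\infty)$, with strict inequality for $r\ge1$.
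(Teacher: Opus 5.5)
\textbf{The large-$r$ step is false as stated.} Your reduction is the same as the paper's. With $U=1+ar^2$, $D=1+r^2$ and $K=U(1-3r^2)+\tfrac{2(m+1)}{m}cr^2$, your bracket $(1+\tfrac1m)(u')^2/u-u''$ equals $2cK/(UD^3)$. Also $m\,u^{1+1/m}f''=2cF/(UD^3)$ with $F=-(m-2)rU^2+bK$, and your small-$r$ inequality is exactly $(m-2)U^2\ge K_+$.

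Your own estimate does not give the threshold you claim. From $cw/u<1$ and $2(1+\tfrac1m)\le\tfrac83$ you only get a nonpositive bracket when $3r^2-1\ge\tfrac83r^2$, i.e.\ $r^2\ge3$, not $r^2\ge1$. The claim also genuinely fails. Since $cw/u=c/U$, at $r=1$ the bracket has the sign of
\[
 \tfrac{2(m+1)}{m}\cdot\tfrac{1-a}{1+a}-2,
\]
which is positive for every $m\ge3$ once $a<1/(2m+1)$; for example $m=3$, $a<1/7$. Your small-$r$ analysis is set up only on $t=r^2\in[0,1]$. So points with $r^2>1$ and $K>0$, which exist for small $a$, are covered by neither case. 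Your justification of strict negativity on $r\ge1$ (``every term is $\le0$'') also breaks down there.

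\textbf{The fix.} Split on the sign of $K$ rather than on $r$, as the paper does. If $K\le0$, then $F\le-(m-2)rU^2\le0$ trivially. If $K>0$, then $b\le r$ gives $F\le-r\bigl((m-2)U^2-K\bigr)$, and
\[
 (m-2)U^2-K=(m-3)+\Bigl(1-\tfrac2m+a\bigl(2m-3+\tfrac2m\bigr)\Bigr)r^2+\bigl((m-2)a^2+3a\bigr)r^4 .
\]
This has nonnegative coefficients for $m\ge3$ and is positive for $r>0$, so $f''<0$ on all of $(0,\infty)$ at once.

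\textbf{The $m=3$ obstacle is illusory.} The same identity settles it. For $m=3$ the constant term cancels, as you observed, but
\[
 U^2-K=\bigl(\tfrac13+\tfrac{11}{3}a\bigr)r^2+(a^2+3a)r^4\ge0 .
\]
So the crude bound $b\le r$ already wins at order $r^3$, and the refined $\arctan$ bound is unnecessary. It is in fact weaker than $b\le r$ once $r^2>5/3$, so it would not have covered the missing range anyway.

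Since this coefficient computation is the entire content of the proposition, it needs to be carried out rather than deferred.
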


\begin{proof}
Suppress the subscript $a$, and put
\[
 c=1-a,
 \qquad D=1+r^2,
 \qquad U=1+ar^2,
 \qquad u=\frac UD.
\]
Since $u'<0$ for $r>0$,
\begin{equation}\label{eq:f-prime}
 f'=u^{(m-1)/m}-\frac bm u^{-(m+1)/m}u'>0.
\end{equation}
Direct differentiation gives
\begin{equation}\label{eq:f-second}
 f''=\frac{u^{-1/m-2}}m
 \left((m-2)u^2u'
 +b\left(\frac{m+1}{m}(u')^2-uu''\right)\right).
\end{equation}
Using
\[
 u'=-\frac{2cr}{D^2},
 \qquad
 u''=-\frac{2c(1-3r^2)}{D^3},
\]
the bracket in \eqref{eq:f-second} equals $2cD^{-4}F(r)$, where
\begin{align}
 F(r)&=-(m-2)rU^2+bK(r),\label{eq:F-def}\\
 K(r)&=U(1-3r^2)+\frac{2(m+1)}mcr^2.\label{eq:K-def}
\end{align}
If $K(r)\leq0$, then $F(r)\leq0$.  If $K(r)>0$, the bounds
\(0<u\leq1\) give $b(r)=\int_0^ru\leq r$, and hence
\[
 F(r)\leq r\bigl(-(m-2)U^2+K(r)\bigr).
\]
Finally,
\begin{align*}
 (m-2)U^2-K(r)
 &=(m-3)
 +\left(1-\frac2m+a\left(2m-3+\frac2m\right)\right)r^2\\
 &\quad+\bigl((m-2)a^2+3a\bigr)r^4.
\end{align*}
All coefficients are nonnegative for $m\geq3$, and the expression is
positive for $r>0$.  Thus $F\leq0$ and $f''\leq0$.  Since the displayed
expression is not identically zero, neither is $f''$.
\end{proof}

\begin{proof}[Proof of \thmref{thm:smooth-counterexamples}]
The function $u_a$ is a smooth positive function of $r^2$, $b_a$ is
smooth and odd, and $f_a=b_au_a^{-1/(n-1)}$ is smooth and odd with
\(f_a'(0)=1\).  \propref{prop:concavity} and
\lemref{lem:inverse-construction} show that the resulting metric is
complete and has nonnegative sectional curvature.  It is nonflat because
\(f_a''\) is not identically zero.

The normalized minimal Green function is $G_a=b_a^{2-n}$, so
\[
 |\nabla b|^2=(b_a')^2
 =\left(\frac{1+ar^2}{1+r^2}\right)^2.
\]
This is smooth in Cartesian coordinates at the origin.

We next compute the scalar curvature at the pole.  Put \(m=n-1\) and
\(c=1-a\).  As \(r\to0\),
\[
 u_a(r)=1-cr^2+O(r^4),
 \qquad
 b_a(r)=r-\frac{c}{3}r^3+O(r^5),
\]
and hence
\begin{align*}
 f_a(r)
 &=b_a(r)u_a(r)^{-1/m}\\
 &=r-\frac{c(m-3)}{3m}r^3+O(r^5)\\
 &=r-\frac{(1-a)(n-4)}{3(n-1)}r^3+O(r^5).
\end{align*}
Consequently,
\[
 f_a'(r)
 =1-\frac{(1-a)(n-4)}{n-1}r^2+O(r^4),
 \qquad
 f_a''(r)
 =-\frac{2(1-a)(n-4)}{n-1}r+O(r^3).
\]
Using \eqref{eq:model-curvatures}, we obtain
\[
 K_{\mathrm{rad}}(0)=K_{\mathrm{tan}}(0)
 =\frac{2(1-a)(n-4)}{n-1}.
\]
All sectional curvatures at the rotationally symmetric pole have this
common value, and therefore
\[
 \Scal_{g_a}(0)
 =n(n-1)K_{g_a}(0)
 =2n(n-4)(1-a).
\]
This is strictly positive for \(n\geq5\), while it vanishes for \(n=4\).
The latter case remains nonflat because \(f_a''\) is not identically zero.

Finally,
\[
 f_a(r)=a^{(n-2)/(n-1)}r+O(1)\qquad(r\to\infty).
\]
Consequently,
\[
 \AVR(g_a)=\lim_{R\to\infty}
 \frac{\Vol_{g_a}(B_R)}{\omega_nR^n}
 =\left(a^{(n-2)/(n-1)}\right)^{n-1}=a^{n-2}>0.
\]
\end{proof}

\section{Proof of Theorem 1.2}\label{sec:jacobi}

Fix $p\in M$, where $p$ is the pole of the Green function.  All estimates
below are uniform in \(\theta\in\Sph^{n-1}\subset T_pM\).  We work for
sufficiently small $t>0$, inside a normal neighborhood of $p$, and use the
curvature convention for which a Jacobi field $Y$ satisfies
\[
 \nabla_{\dot\gamma}^2Y+\Rm(Y,\dot\gamma)\dot\gamma=0.
\]

\subsection{Radial curvature in a fixed vector space}

For \(\theta\in\Sph^{n-1}\subset T_pM\), let
\begin{equation}\label{eq:radial-geodesic}
 \gamma_\theta(t)=\exp_p(t\theta),
 \qquad \dot\gamma_\theta(t)=\partial_r.
\end{equation}
Let \(P_t:T_pM\to T_{\gamma_\theta(t)}M\) be parallel transport along
\(\gamma_\theta\).  Since \(P_t\theta=\dot\gamma_\theta(t)\), parallel
transport identifies \(\dot\gamma_\theta(t)^\perp\) with the fixed vector
space \(\theta^\perp\).

Define the radial curvature operator
\(R_\theta(t):\theta^\perp\to\theta^\perp\) by
\begin{equation}\label{eq:radial-curvature}
 R_\theta(t)X
 =P_t^{-1}\!\left(
   \Rm_{\gamma_\theta(t)}\bigl(P_tX,\dot\gamma_\theta(t)\bigr)
   \dot\gamma_\theta(t)\right),
 \qquad X\in\theta^\perp.
\end{equation}
For each fixed $\theta$, the right-hand side of
\eqref{eq:radial-curvature} is a composition of smooth objects: the
curvature tensor along \(\gamma_\theta\), the velocity
\(\dot\gamma_\theta\), parallel transport $P_t$, and its inverse.
Consequently, $R_\theta(t)$ is smooth in $t$ for each fixed $\theta$.

The trace of $R_\theta$ is the radial Ricci curvature.  Indeed, choose an
orthonormal basis \(\{e_\alpha\}_{\alpha=1}^{n-1}\) of $\theta^\perp$.
Then
\(\{P_t\theta,P_te_1,\ldots,P_te_{n-1}\}\) is an orthonormal basis of
\(T_{\gamma_\theta(t)}M\), and hence
\begin{equation}\label{eq:radial-curvature-trace}
\begin{aligned}
 \tr R_\theta(t)
 &=\sum_{\alpha=1}^{n-1}
   \langle e_\alpha,R_\theta(t)e_\alpha\rangle_p\\
 &=\sum_{\alpha=1}^{n-1}
   \left\langle P_te_\alpha,
   \Rm(P_te_\alpha,\dot\gamma_\theta)\dot\gamma_\theta
   \right\rangle_{\gamma_\theta(t)}\\
 &=\Ric_{\gamma_\theta(t)}
   \bigl(\dot\gamma_\theta(t),\dot\gamma_\theta(t)\bigr)\\
 &=\Ric(\partial_r,\partial_r)(t,\theta).
\end{aligned}
\end{equation}
Thus \(\Ric(\partial_r,\partial_r)(t,\theta)\) is also smooth in $t$ for
every fixed $\theta$.

We next record the Taylor coefficients of
\eqref{eq:radial-curvature}.  In the parallel frame determined by $P_t$,
differentiation gives
\[
 \frac{\dd}{\dd t}R_\theta(t)
 =P_t^{-1}\!\left(
   (\nabla_{\dot\gamma_\theta}\Rm)_{\gamma_\theta(t)}
   (P_t\,\cdot\,,\dot\gamma_\theta)\dot\gamma_\theta\right).
\]
The derivatives of $P_t$ disappear because the frame is parallel, and
\(\nabla_{\dot\gamma_\theta}\dot\gamma_\theta=0\) because
\(\gamma_\theta\) is a geodesic.  Since $P_0=\Id$, it follows that
\[
 \left.\frac{\dd}{\dd t}R_\theta(t)\right|_{t=0}
 =(\nabla_\theta\Rm)_p(\,\cdot\,,\theta)\theta.
\]
Repeating this argument yields, for every integer $k\geq0$,
\[
 \left.\frac{\dd^k}{\dd t^k}R_\theta(t)\right|_{t=0}
 =(\nabla_\theta^k\Rm)_p(\,\cdot\,,\theta)\theta.
\]
Accordingly, Taylor's theorem gives, for every fixed $N\geq0$,
\begin{equation}\label{eq:radial-curvature-Taylor}
 R_\theta(t)
 =\sum_{k=0}^{N}\frac{t^k}{k!}
   (\nabla_\theta^k\Rm)_p(\,\cdot\,,\theta)\theta
   +O(t^{N+1}).
\end{equation}

Taking traces commutes with differentiation.  Therefore
\[
 \left.\frac{\dd^k}{\dd t^k}\tr R_\theta(t)\right|_{t=0}
 =\tr\!\left((\nabla_\theta^k\Rm)_p
 (\,\cdot\,,\theta)\theta\right).
\]
Consequently, for every fixed integer $\ell\geq0$, the corresponding
radial Ricci expansion is
\begin{equation}\label{eq:radial-Ricci-Taylor}
 \Ric(\partial_r,\partial_r)(t,\theta)
 =\sum_{k=0}^{\ell}\frac{t^k}{k!}
   \bigl(\nabla_\theta^k\Ric\bigr)_p(\theta,\theta)
   +O(t^{\ell+1}).
\end{equation}

\subsection{The Jacobi matrix and geodesic spheres}

For each fixed $\theta$, let
\(J_\theta(t):\theta^\perp\to\theta^\perp\) be the solution of
\begin{equation}\label{eq:Jacobi}
 J_\theta''(t)+R_\theta(t)J_\theta(t)=0,
 \qquad J_\theta(0)=0,
 \qquad J_\theta'(0)=\Id.
\end{equation}
Because $R_\theta(t)$ is smooth in $t$ for fixed $\theta$, standard smooth
dependence for ordinary differential equations shows that $J_\theta(t)$ is
smooth in $t$ as well.  The initial conditions imply
\(J_\theta(t)=t\Id+O(t^3)\).  Hence, for sufficiently small $t>0$,
$J_\theta(t)$ is invertible.

Writing $J_\theta(t)=tK_\theta(t)$, we have $K_\theta(0)=\Id$ and
$K_\theta$ is smooth.  Thus
\begin{equation}\label{eq:Jacobi-inverse}
 J_\theta(t)^{-1}=\frac1tK_\theta(t)^{-1}
\end{equation}
for small $t>0$.  The shape operator of the geodesic sphere
\(\partial B_t(p)\), with respect to its outward unit normal $\partial_r$,
is
\begin{equation}\label{eq:B-def}
 S_\theta(t)=J_\theta'(t)J_\theta(t)^{-1}
 =\frac1t\Id+K_\theta'(t)K_\theta(t)^{-1},
 \qquad
 B_\theta(t):=K_\theta'(t)K_\theta(t)^{-1}.
\end{equation}
In particular, $B_\theta(t)$ is smooth in $t$ for each fixed $\theta$.
Define
\begin{equation}\label{eq:beta-def}
 \beta(t,\theta)=\tr B_\theta(t).
\end{equation}
Then
\begin{equation}\label{eq:Delta-r-beta}
 \Delta r=\tr S_\theta(t)=\frac{n-1}{t}+\beta(t,\theta).
\end{equation}
Differentiating \(S_\theta=J_\theta'J_\theta^{-1}\) and using
\eqref{eq:Jacobi}, we obtain the classical Riccati equation
\begin{equation}\label{eq:S-Riccati}
 S_\theta'(t)+S_\theta(t)^2+R_\theta(t)=0.
\end{equation}
Substitution of \eqref{eq:B-def} into \eqref{eq:S-Riccati} cancels the two
$t^{-2}\Id$ terms and gives
\begin{equation}\label{eq:B-Riccati}
 B_\theta'(t)+\frac2tB_\theta(t)+B_\theta(t)^2+R_\theta(t)=0.
\end{equation}

\subsection{The exact trace identity}

\begin{lemma}\label{lem:trace-Riccati}
Along every radial geodesic,
\begin{equation}\label{eq:beta-ode}
 \beta'(t,\theta)+\frac2t\beta(t,\theta)
 +|B_\theta(t)|^2
 +\Ric(\partial_r,\partial_r)(t,\theta)=0,
\end{equation}
where \(|B_\theta(t)|^2\) denotes
\(\tr\bigl(B_\theta(t)^2\bigr)\).  Consequently,
\begin{equation}\label{eq:beta-integral}
 \beta(t,\theta)
 =-t^{-2}\int_0^t s^2
 \bigl(|B_\theta(s)|^2
 +\Ric(\partial_r,\partial_r)(s,\theta)\bigr)\dd s.
\end{equation}
\end{lemma}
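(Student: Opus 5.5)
The plan is to take the trace of the matrix Riccati equation \eqref{eq:B-Riccati} and then integrate the resulting scalar ODE with the integrating factor $t^2$.

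First, trace \eqref{eq:B-Riccati}. Since trace is linear and commutes with differentiation in $t$, we have $\tr B_\theta'=\beta'$. By definition, $\tr(B_\theta^2)=|B_\theta|^2$. By \eqref{eq:radial-curvature-trace}, $\tr R_\theta(t)=\Ric(\partial_r,\partial_r)(t,\theta)$. This gives \eqref{eq:beta-ode} directly. It is worth noting that $B_\theta$ is self-adjoint, because $S_\theta$ is the shape operator and $\Id/t$ is self-adjoint. Hence $\tr(B^2)$ is the Hilbert--Schmidt norm squared and is nonnegative. This is not needed for the identity itself, but it justifies the notation.

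Second, multiply \eqref{eq:beta-ode} by $t^2$. The first two terms combine into a derivative, $(t^2\beta)'=t^2\beta'+2t\beta$, so
\[
 \bigl(t^2\beta(t,\theta)\bigr)'=-t^2\bigl(|B_\theta(t)|^2+\Ric(\partial_r,\partial_r)(t,\theta)\bigr).
\]
Integrate from $\epsilon$ to $t$ and let $\epsilon\downarrow0$. The only point requiring care is the boundary term $\epsilon^2\beta(\epsilon,\theta)\to0$. This holds because $B_\theta=K_\theta'K_\theta^{-1}$ is smooth on $[0,t]$ with $K_\theta(0)=\Id$, so $\beta$ is bounded near $0$. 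In fact $K_\theta=\Id+O(t^2)$, so $\beta=O(t)$. The integrand is also smooth, hence integrable, on $[0,t]$. Dividing by $t^2$ then yields \eqref{eq:beta-integral}.

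No step is a serious obstacle. The one thing to check is the regularity of $B_\theta$ at $t=0$, which justifies dropping the boundary term; it follows from the smoothness of $K_\theta$ established before \eqref{eq:B-def}.
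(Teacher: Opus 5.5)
Your proposal is correct and matches the paper's proof essentially step for step: trace \eqref{eq:B-Riccati} using \eqref{eq:radial-curvature-trace}, multiply by $t^2$ to get $(t^2\beta)'=-t^2\bigl(|B_\theta|^2+\Ric(\partial_r,\partial_r)\bigr)$, and integrate, with the boundary term $s^2\beta(s,\theta)\to0$ justified by the smoothness of $B_\theta$ at $t=0$. Your extra remarks are correct but not needed for the identity: $B_\theta$ is self-adjoint, so $\tr(B_\theta^2)\ge0$, and $K_\theta=\Id+O(t^2)$ gives $\beta=O(t)$.
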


\begin{proof}
By \eqref{eq:beta-def},
\[
 \tr\bigl(B_\theta'(t)\bigr)
 =\frac{\dd}{\dd t}\tr B_\theta(t)=\beta'(t,\theta).
\]
Moreover,
\[
 \tr\bigl(B_\theta(t)^2\bigr)=|B_\theta(t)|^2,
\]
by the notation in the statement, while
\eqref{eq:radial-curvature-trace} gives
\[
 \tr R_\theta(t)=\Ric(\partial_r,\partial_r)(t,\theta).
\]
Taking the trace of \eqref{eq:B-Riccati} therefore proves
\eqref{eq:beta-ode}.  Multiplication by $t^2$ gives
\[
 \frac{\dd}{\dd t}\bigl(t^2\beta(t,\theta)\bigr)
 =-t^2\bigl(|B_\theta(t)|^2
 +\Ric(\partial_r,\partial_r)(t,\theta)\bigr).
\]
Integrating from zero to $t$ yields
\[
 \int_0^t\frac{\dd}{\dd s}\bigl(s^2\beta(s,\theta)\bigr)\dd s
 =-\int_0^t s^2\bigl(|B_\theta(s)|^2
 +\Ric(\partial_r,\partial_r)(s,\theta)\bigr)\dd s.
\]
Because $B_\theta$ is smooth, so is $\beta$, and hence
\(\lim_{s\downarrow0}s^2\beta(s,\theta)=0\).  Dividing the resulting
identity by $t^2$ proves \eqref{eq:beta-integral}.
\end{proof}

\subsection{The critical Jacobi estimate}

\begin{lemma}\label{lem:critical-Jacobi}
Let $n\geq5$ be odd and put
\begin{equation}\label{eq:ell-def}
 \ell=\frac{n-5}{2}.
\end{equation}
Suppose, uniformly in $\theta$, that near \textit{p} 
\begin{equation}\label{eq:radial-assumptions}
 R_\theta(t)=O(t^\ell),
 \qquad
 \Ric(\partial_r,\partial_r)(t,\theta)=O(t^{n-3}).
\end{equation}
Then
\begin{equation}\label{eq:Jacobi-estimates}
 B_\theta(t)=O(t^{\ell+1}),
 \qquad |B_\theta(t)|^2=O(t^{n-3}),
 \qquad \beta(t,\theta)=O(t^{n-2}).
\end{equation}
More precisely, smoothness and \eqref{eq:radial-assumptions} give expansions
\begin{equation}\label{eq:R-leading}
\begin{aligned}
 R_\theta(t)&=t^\ell R_\ell(\theta)+O(t^{\ell+1}),\\
 \Ric(\partial_r,\partial_r)(t,\theta)
 &=t^{n-3}\rho(\theta)+O(t^{n-2}),
\end{aligned}
\end{equation}
and these coefficients satisfy
\begin{equation}\label{eq:B-leading}
 B_\theta(t)=-\frac{t^{\ell+1}}{\ell+3}R_\ell(\theta)
 +O(t^{\ell+2}),
\end{equation}
\begin{equation}\label{eq:beta-leading}
 \beta(t,\theta)
 =-\frac1nA(\theta)t^{n-2}+O(t^{n-1}),
 \qquad
 A(\theta)=\rho(\theta)+\frac{|R_\ell(\theta)|^2}{(\ell+3)^2}.
\end{equation}
Here \(|R_\ell(\theta)|^2=\tr(R_\ell(\theta)^2)\), and $A$ is a smooth
even function on \(\Sph^{n-1}\). 

Moreover 
\begin{equation}\label{eq:Delta-r-critical}
 \Delta r=\frac{n-1}{r}-\frac1nA(\theta)r^{n-2}+O(r^{n-1}),
 \qquad A(-\theta)=A(\theta).
\end{equation}
\end{lemma}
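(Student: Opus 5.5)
The plan is to work directly with the matrix Riccati equation \eqref{eq:B-Riccati} and its traced form \eqref{eq:beta-integral}. I would first derive an integral representation for $B_\theta$ itself. Multiplying \eqref{eq:B-Riccati} by $t^2$ gives $(t^2B_\theta)'=-t^2(B_\theta^2+R_\theta)$. Since $B_\theta$ is smooth at $t=0$, integrating from $0$ yields
\[
 B_\theta(t)=-t^{-2}\int_0^t s^2\bigl(B_\theta(s)^2+R_\theta(s)\bigr)\dd s .
\]
The expansions \eqref{eq:R-leading} follow from smoothness in $t$, from \eqref{eq:radial-curvature-Taylor} and \eqref{eq:radial-Ricci-Taylor}, and from \eqref{eq:radial-assumptions}. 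The coefficients are $R_\ell(\theta)=\frac1{\ell!}(\nabla^\ell_\theta\Rm)_p(\cdot,\theta)\theta$ and $\rho(\theta)=\frac1{(n-3)!}(\nabla^{n-3}_\theta\Ric)_p(\theta,\theta)$. Both are homogeneous polynomials in $\theta$. Their parities are $(-1)^{\ell+2}$ and $(-1)^{n-1}=+1$, the latter because $n$ is odd.

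The estimate for $B_\theta$ comes from a bootstrap. A priori $B_\theta=O(t)$ by smoothness, since $B_\theta(0)=K_\theta'(0)=0$. Suppose $B_\theta=O(t^k)$ with $k\le\ell+1$. Then $B_\theta^2=O(t^{2k})$ and $R_\theta=O(t^\ell)$, so the integral formula gives $B_\theta=O(t^{\min(2k+1,\ell+1)})$. Iterating reaches $B_\theta=O(t^{\ell+1})$ in finitely many steps, uniformly in $\theta$, because the constants are uniform on the compact sphere. Substituting back, the $B^2$ term contributes $O(t^{2\ell+3})$, which is of higher order than $t^{\ell+2}$ since $\ell\ge0$. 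The $R$ term contributes $-t^{-2}\int_0^t s^{2+\ell}R_\ell\dd s+O(t^{\ell+2})$. Together these give \eqref{eq:B-leading}. It follows that $|B_\theta|^2=O(t^{2\ell+2})=O(t^{n-3})$, because $2\ell+2=n-3$.

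The key step is to notice that this exponent is exactly critical. We have $|B_\theta|^2=\frac{t^{n-3}}{(\ell+3)^2}|R_\ell|^2+O(t^{n-2})$, so it enters at the same order as the radial Ricci term. Inserting this and \eqref{eq:R-leading} into \eqref{eq:beta-integral} gives
\[
 \beta=-t^{-2}\int_0^t s^{n-1}A(\theta)\dd s+O(t^{n-1})=-\tfrac1nA(\theta)t^{n-2}+O(t^{n-1}).
\]
Then \eqref{eq:Delta-r-critical} is \eqref{eq:Delta-r-beta} with $t=r$.

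Evenness of $A$ has two sources. $|R_\ell(-\theta)|^2=((-1)^\ell)^2|R_\ell(\theta)|^2$, and $\rho$ is even as noted above. Smoothness of $A$ holds because both pieces are restrictions of polynomials in $\theta$.

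The main obstacle is uniformity in $\theta$ of all the $O(\cdot)$ remainders. For this I would use that $R_\theta(t)$, $J_\theta(t)$ and $K_\theta(t)$ are smooth jointly in $(t,\theta)$ on $[0,\epsilon]\times\Sph^{n-1}$. This holds because they are built from $\exp_p$, parallel transport, and ODE solutions with smooth parameter dependence. With joint smoothness, the Taylor remainders are uniformly bounded on the compact set.
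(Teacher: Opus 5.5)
Your proof is correct. Its overall architecture matches the paper's: the Riccati equation for $B_\theta$, the traced integral identity \eqref{eq:beta-integral}, and a parity argument for $A$. The difference lies in how you obtain the order and the leading term of $B_\theta$.

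The paper gets $B_\theta=O(t^{\ell+1})$ in one step from the linear Jacobi equation. The identity $J_\theta'(t)=\Id-\int_0^tR_\theta(s)J_\theta(s)\dd s$ gives $J_\theta=t\Id+O(t^{\ell+3})$, and inverting yields $S_\theta=t^{-1}\Id+O(t^{\ell+1})$. It then writes $B_\theta=t^{\ell+1}C(\theta)+O(t^{\ell+2})$ and matches the $t^\ell$ coefficients in \eqref{eq:B-Riccati}. That step uses smoothness of $B_\theta$ to differentiate its Taylor expansion.

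You instead work throughout with the untraced integral form $B_\theta=-t^{-2}\int_0^ts^2(B_\theta^2+R_\theta)\dd s$. Because of the quadratic term, you need a finite bootstrap starting from $B_\theta=O(t)$, whereas the linear Jacobi route needs no iteration. In exchange, the coefficient $-R_\ell(\theta)/(\ell+3)$ falls out of the same integral without differentiating any remainder. Your computation also runs exactly parallel to the subsequent $\beta$ step.

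Your parity argument uses homogeneity of the Taylor coefficients, of degrees $\ell+2$ and $n-1$ in $\theta$. It is equivalent to the paper's identity $R_{-\theta}(t)=R_\theta(-t)$.

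Finally, your appeal to joint smoothness in $(t,\theta)$ on $[0,\epsilon]\times\Sph^{n-1}$ actually justifies the uniformity in $\theta$ of all the remainders. The paper asserts that uniformity while only recording smoothness in $t$ for each fixed $\theta$.
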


\begin{proof}
We begin with the order of $B_\theta$.  Integrating the Jacobi equation
\eqref{eq:Jacobi} once gives
\[
 J_\theta'(t)=\Id-\int_0^tR_\theta(s)J_\theta(s)\dd s.
\]
The initial conditions imply $J_\theta(s)=O(s)$.  Using
$R_\theta(s)=O(s^\ell)$ in the integral therefore yields
\[
 J_\theta'(t)=\Id+O(t^{\ell+2}),
 \qquad
 J_\theta(t)=t\Id+O(t^{\ell+3}).
\]
It follows that
\[
 J_\theta(t)^{-1}
 =\frac1t\bigl(\Id+O(t^{\ell+2})\bigr)
 =\frac1t\Id+O(t^{\ell+1}).
\]
Consequently,
\[
 S_\theta(t)=J_\theta'(t)J_\theta(t)^{-1}
 =\frac1t\Id+O(t^{\ell+1}),
\]
and hence, by \eqref{eq:B-def},
\begin{equation}\label{eq:B-O}
  B_\theta(t)=O(t^{\ell+1}).  
\end{equation}
Therefore
\[
 |B_\theta(t)|^2=\tr\bigl(B_\theta(t)^2\bigr)
 =O(t^{2\ell+2})=O(t^{n-3}).
\]
Combining this estimate with the radial Ricci estimate in
\eqref{eq:radial-assumptions} and applying the exact identity
\eqref{eq:beta-integral} proves \(\beta(t,\theta)=O(t^{n-2})\).

Because $B_\theta$ is smooth, the first estimate in
\eqref{eq:Jacobi-estimates} allows us to write
\[
 B_\theta(t)=t^{\ell+1}C(\theta)+O(t^{\ell+2})
\]
for a coefficient $C(\theta)$.  Similarly, smoothness gives the two
expansions in \eqref{eq:R-leading}.  Substitution into
\eqref{eq:B-Riccati} and comparison of the coefficient of $t^\ell$ give
\[
 (\ell+1)C(\theta)+2C(\theta)+R_\ell(\theta)=0.
\]
Thus
\[
 C(\theta)=-\frac1{\ell+3}R_\ell(\theta),
\]
which proves \eqref{eq:B-leading}.  Squaring and taking the trace yields
\begin{equation}\label{eq:B-square-leading}
 |B_\theta(t)|^2
 =\frac{t^{2\ell+2}}{(\ell+3)^2}|R_\ell(\theta)|^2
 +O(t^{n-2}).
\end{equation}
Because $2\ell+2=n-3$, equations \eqref{eq:R-leading} and
\eqref{eq:B-square-leading} imply
\begin{equation}\label{eq:trace-source-leading}
 |B_\theta(t)|^2+\Ric(\partial_r,\partial_r)(t,\theta)
 =A(\theta)t^{n-3}+O(t^{n-2}),
\end{equation}
with $A(\theta)$ as in \eqref{eq:beta-leading}.  Substitution into
\eqref{eq:beta-integral} gives
\[
\begin{aligned}
 \beta(t,\theta)
 &=-t^{-2}\int_0^t
   \bigl(A(\theta)s^{n-1}+O(s^n)\bigr)\dd s\\
 &=-\frac1nA(\theta)t^{n-2}+O(t^{n-1}),
\end{aligned}
\]
which proves the remaining expansion.

Extend $\gamma_\theta$ to signed $t$ and use the same parallel frame.  Since
\(\gamma_{-\theta}(t)=\gamma_\theta(-t)\) and the two occurrences of the
radial velocity in \eqref{eq:radial-curvature} cancel its sign, we have
\[
 R_{-\theta}(t)=R_\theta(-t).
\]
Comparing this identity with \eqref{eq:R-leading} gives
\[
 R_\ell(-\theta)=(-1)^\ell R_\ell(\theta).
\]
In the same way, the radial Ricci function satisfies
\(\Ric_{-\theta}(t)=\Ric_\theta(-t)\).  Since $n$ is odd, $n-3$ is even.
By \eqref{eq:R-leading},
\[
 \rho(-\theta)=\rho(\theta).
\]
It follows directly from the definition in \eqref{eq:beta-leading} that
\(A(-\theta)=A(\theta)\).
Moreover, by \eqref{eq:radial-curvature-Taylor},
\[
 R_\ell(\theta)
 =\frac1{\ell!}(\nabla_\theta^\ell\Rm)_p(\,\cdot\,,\theta)\theta,
\]
and by \eqref{eq:radial-Ricci-Taylor},
\[
 \rho(\theta)
 =\frac1{(n-3)!}(\nabla_\theta^{n-3}\Ric)_p(\theta,\theta).
\]
Thus $R_\ell$ and $\rho$ belong to $C^\infty(\Sph^{n-1})$, and hence
$A\in C^\infty(\Sph^{n-1})$.

Combining \eqref{eq:Delta-r-beta} and \eqref{eq:beta-leading}, we obtain
the pole expansion
\begin{equation*}
 \Delta r=\frac{n-1}{r}-\frac1nA(\theta)r^{n-2}+O(r^{n-1}),
 \qquad A(-\theta)=A(\theta).
\end{equation*}
\end{proof}

\begin{lemma}\label{D-r}
Let \(M^3\) be a  three-dimensional Riemannian manifold. Then
\begin{equation}\label{eq;D-r}
  \Delta r
=
\frac{2}{r}
-\frac{1}{3}\Ric_p(\theta,\theta)r
+O(r^2).
\end{equation}

\end{lemma}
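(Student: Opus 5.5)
We obtain the three-dimensional expansion directly from the machinery of Section~\ref{sec:jacobi}, in particular the exact trace identity of \lemref{lem:trace-Riccati}. No curvature hypothesis is imposed. In the notation there, $n=3$ and $\theta^\perp$ is two-dimensional. By \eqref{eq:Delta-r-beta},
\[
 \Delta r=\frac{2}{r}+\beta(r,\theta),
\]
so it suffices to show that
\[
 \beta(t,\theta)=-\tfrac13\Ric_p(\theta,\theta)\,t+O(t^2)
\]
uniformly in $\theta$.

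The first step is a crude bound on $B_\theta$. We argue as in the proof of \lemref{lem:critical-Jacobi}, but with $\ell=0$, which only requires $R_\theta(t)=O(1)$. Integrating \eqref{eq:Jacobi} gives $J_\theta'(t)=\Id+O(t^2)$ and $J_\theta(t)=t\Id+O(t^3)$. Hence $S_\theta(t)=t^{-1}\Id+O(t)$, and therefore $B_\theta(t)=O(t)$ and $|B_\theta(t)|^2=O(t^2)$.

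The second step uses the Ricci Taylor expansion \eqref{eq:radial-Ricci-Taylor} with $\ell=0$:
\[
 \Ric(\partial_r,\partial_r)(s,\theta)=\Ric_p(\theta,\theta)+O(s).
\]
Substituting this and the bound on $|B_\theta|^2$ into \eqref{eq:beta-integral} gives
\[
 \beta(t,\theta)=-t^{-2}\int_0^t s^2\bigl(\Ric_p(\theta,\theta)+O(s)\bigr)\dd s
 =-\tfrac13\Ric_p(\theta,\theta)\,t+O(t^2).
\]
Note that the $|B_\theta|^2$ term contributes $t^{-2}\int_0^t O(s^4)\,\dd s=O(t^3)$, which is absorbed into the error. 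This is exactly \eqref{eq;D-r}.

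The only point needing care is uniformity of the $O$-constants in $\theta\in\Sph^{n-1}$. The Taylor remainders in \eqref{eq:radial-curvature-Taylor} and \eqref{eq:radial-Ricci-Taylor} are controlled by bounds on $\nabla\Rm$ and $\Rm$ on a compact normal ball around $p$. The Gronwall-type bounds for $J_\theta$ likewise depend only on $\sup|R_\theta|$ over that ball. Since these suprema are uniform over the compact sphere, so are all the error terms. I expect no real obstacle here.

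Alternatively, one could quote the classical expansion of the volume density, $\sqrt{\det g}=r^{n-1}\bigl(1-\tfrac16\Ric(\theta,\theta)r^2+O(r^3)\bigr)$, and differentiate its logarithm. However, the trace-identity route above stays within the paper's framework and gives the derivative of the error term for free.
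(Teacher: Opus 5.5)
Your proposal is correct and follows the paper's proof essentially step for step: you get $B_\theta(t)=O(t)$ from the $\ell=0$ Jacobi argument, use the zeroth-order radial Ricci expansion, and substitute both into the trace identity \eqref{eq:beta-integral}. You are in fact slightly more careful than the paper, which cites \eqref{eq:B-O} from a lemma stated only for odd $n\geq5$, whereas you re-derive that bound explicitly with $\ell=0$.
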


\begin{proof}
By \eqref{eq:radial-curvature-Taylor} and \eqref{eq:radial-Ricci-Taylor}, we have
\[
R_\theta(t)=O(1),
\]
and
\[
\Ric(\partial_r,\partial_r)(t,\theta)
=
\Ric_p(\theta,\theta)+O(t).
\]

By \eqref{eq:B-O}, we have
\[
|B_\theta(s)|^2=\tr\bigl(B_\theta(s)^2\bigr)
 =O(s^2).
\]
Substituting these estimates into \eqref{eq:beta-integral}, we obtain
\begin{align*}
\beta(t,\theta)
&=
-t^{-2}
\int_0^t
s^2
\left(
O(s^2)+\Ric_p(\theta,\theta)+O(s)
\right)\,ds \\
&=
-\frac{1}{3}\Ric_p(\theta,\theta)t
+O(t^2).
\end{align*}

Substituting this into \eqref{eq:Delta-r-beta} yields
\[
\Delta r
=
\frac{2}{r}
-\frac{1}{3}\Ric_p(\theta,\theta)r
+O(r^2).
\]
\end{proof}

\begin{lemma}\label{lem:scalar-to-Ricci-jets}
Let $(M,g)$ be a  Riemannian manifold with $\Ric\geq0$  near p, and let $k\geq0$.  If
\begin{equation}\label{eq:scalar-to-Ricci-hypothesis}
 \nabla^j\Scal(p)=0 \qquad (0\leq j\leq k),
\end{equation}
then
\begin{equation}\label{eq:scalar-to-Ricci-conclusion}
 \nabla^j\Ric(p)=0 \qquad (0\leq j\leq k).
\end{equation}
\end{lemma}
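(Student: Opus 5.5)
The plan is to turn the pointwise trace condition into a pointwise bound on the whole Ricci tensor, using $\Ric\geq0$. The key observation is elementary linear algebra. At any point $x$ near $p$, $\Ric_x$ is a positive semidefinite symmetric endomorphism, so each of its eigenvalues lies in $[0,\Scal(x)]$. Hence
\[
 |\Ric|(x)\leq \Scal(x)
\]
in operator norm, and likewise in Hilbert--Schmidt norm, since $\sum\lambda_i^2\leq(\sum\lambda_i)^2$ for $\lambda_i\geq0$. So it suffices to show two things: that $\Scal=O(r^{k+1})$ near $p$, and that a smooth tensor which is $O(r^{k+1})$ has vanishing covariant jets of order $\leq k$ at $p$.

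Work in normal coordinates $x$ centred at $p$; all components are then smooth functions of $x$ near $0$.

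First I will show that the vanishing of $\nabla^j\Scal(p)$ for $j\leq k$ is equivalent to the vanishing of all coordinate partial derivatives $\partial^\alpha\Scal(0)$ with $|\alpha|\leq k$. In any chart,
\[
 \nabla^jT=\partial^jT+\sum\bigl(\text{polynomials in }\partial^{i}\Gamma,\ i\leq j-1\bigr)\cdot\partial^{m}T,\qquad m<j .
\]
Induction on $j$ then shows that the covariant jet of order $\leq k$ vanishes at a point if and only if the partial jet of order $\leq k$ does. This holds for functions and for tensors alike. Taylor's theorem then gives $\Scal(x)=O(|x|^{k+1})=O(r^{k+1})$.

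Next, by the eigenvalue bound, every component $R_{ij}(x)$ in normal coordinates satisfies $|R_{ij}(x)|\leq C\Scal(x)\leq C r^{k+1}$. Here I use that $g_{ij}$ is uniformly comparable to $\delta_{ij}$ near $0$, so component size is controlled by the norm. A smooth function $h$ with $|h(x)|\leq C|x|^{k+1}$ has $\partial^\alpha h(0)=0$ for all $|\alpha|\leq k$. To see this, compare $h$ with its degree-$k$ Taylor polynomial $P$: then $P=O(|x|^{k+1})$, which forces $P\equiv0$ by homogeneity, looking at the lowest nonzero homogeneous part along rays. So the partial jet of $\Ric$ of order $\leq k$ vanishes at $0$, and the equivalence from the first step, applied now to the tensor $\Ric$, gives $\nabla^j\Ric(p)=0$ for $0\leq j\leq k$.

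The only step needing care is this bookkeeping between covariant and partial jets. It is routine: the Christoffel symbols are smooth, and each term in the expansion of $\nabla^jT$ involves at least one partial derivative of $T$ of strictly lower order, so the induction closes. The conceptual content is just the inequality $|\Ric|\leq\Scal$ under $\Ric\geq0$. This is also the only place the sign hypothesis enters, and without it the lemma fails.
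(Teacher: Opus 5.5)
Your argument is correct, and it takes a genuinely different route from the paper's. The paper inducts on the order $m$. Along a radial geodesic with a parallel field $Y$, it Taylor-expands the nonnegative function $t\mapsto\Ric(Y(t),Y(t))$ and kills the leading coefficient $(\nabla^m_\theta\Ric)_p(Y,Y)$: by a sign-change argument when $m$ is odd, and by ``nonnegative form with zero trace'' when $m$ is even. It then needs the Ricci commutation identities, together with the induction hypothesis to kill the curvature terms at $p$, to see that $\nabla^m\Ric(p)$ is symmetric in its derivative slots, so that polarization in $\theta$ recovers the full tensor.

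You replace all of this with one pointwise inequality, $|\Ric|\le\Scal$ wherever $\Ric\ge0$, which transfers the vanishing order of $\Scal$ to $\Ric$ in a single step. The parity split disappears. The non-commutativity of covariant derivatives is absorbed into the triangular relation between covariant and coordinate jets, where coordinate derivatives commute automatically.

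The paper's route stays intrinsic and radial, matching the expansions \eqref{eq:radial-Ricci-Taylor} used elsewhere. Its parity mechanism also yields the odd-order statements of \remref{rem:odd-order-jets} essentially for free; your inequality alone does not give them. To upgrade even-order vanishing of $\Scal$ to $\Scal=O(r^{k+1})$, you would still need a parity argument for the nonnegative function $\Scal$.

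Your route buys brevity and a quantitative estimate. Once the jets of $\Scal$ up to order $n-4$ vanish, you get $0\le\Ric(\partial_r,\partial_r)\le\Scal=O(r^{n-3})$ directly. This is exactly the radial Ricci bound needed in \propref{prop:finite-jets-to-radial}, obtained without passing through the jets of $\Ric$ at all.
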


\begin{proof}
We argue by induction on the order of differentiation.  Since $\Ric(p)$ is
a nonnegative symmetric bilinear form and
\[
 \tr\Ric(p)=\Scal(p)=0,
\]
we first obtain $\Ric(p)=0$.

Suppose that $1\leq m\leq k$ and that
$\nabla^j\Ric(p)=0$ for $0\leq j<m$.  Fix $\theta,Y\in T_pM$, let
\[
 \gamma_\theta(t)=\exp_p(t\theta),
\]
and let $Y(t)$ be the parallel vector field along $\gamma_\theta$ with
$Y(0)=Y$.  The function
\[
 F(t)=\Ric_{\gamma_\theta(t)}\bigl(Y(t),Y(t)\bigr)
\]
is nonnegative and smooth for all sufficiently small $t$, of either sign.
The Taylor expansion and the induction hypothesis give
\begin{equation}\label{eq:Ricci-directional-Taylor}
 F(t)=\frac{t^m}{m!}
 \bigl(\nabla_\theta^m\Ric\bigr)_p(Y,Y)+O(t^{m+1}).
\end{equation}
If $m$ is odd, then \eqref{eq:Ricci-directional-Taylor} can be written as
\begin{equation}\label{eq:Ricci-directional-Taylor-odd}
 F(t)=\frac{t^m}{m!}
 \left[\bigl(\nabla_\theta^m\Ric\bigr)_p(Y,Y)+O(t)\right].
\end{equation}
Since $F(t)\geq0$ for both signs of $t$, taking $t<0$ and $t>0$ forces
\[
 \bigl(\nabla_\theta^m\Ric\bigr)_p(Y,Y)=0.
\]
By the arbitrariness of $Y$,
\(\bigl(\nabla_\theta^m\Ric\bigr)_p=0\).

If $m$ is even, the inequality $F(t)\geq0$ implies
\[
 \bigl(\nabla_\theta^m\Ric\bigr)_p(Y,Y)\geq0.
\]
Thus \(\bigl(\nabla_\theta^m\Ric\bigr)_p\) is a nonnegative symmetric
bilinear form in $Y$.  Moreover,
\[
 \tr\bigl(\nabla_\theta^m\Ric\bigr)_p
 =\bigl(\nabla_\theta^m\Scal\bigr)(p)=0,
\]
so \(\bigl(\nabla_\theta^m\Ric\bigr)_p=0\).

In either case,
\(\bigl(\nabla_\theta^m\Ric\bigr)_p=0\) for every $\theta\in T_pM$.
The Ricci commutation identities, together with the induction hypothesis,
show that the $m$ covariant-derivative slots of $\nabla^m\Ric$ are
symmetric at $p$.  Polarization in $\theta$ therefore gives
$\nabla^m\Ric(p)=0$.  This completes the induction.
\end{proof}

\begin{remark}\label{rem:odd-order-jets}
The same proof gives the following odd-order consequence on every manifold
with $\Ric\geq0$  near \textit{p}: if
\[
 \left.\nabla^j\Ric\right|_p=0
 \qquad (0\leq j\leq2i),\qquad i\geq0,
\]
then \(\left.\nabla^{2i+1}\Ric\right|_p=0\).  Similarly, if
\[
 \left.\nabla^j\Scal\right|_p=0
 \qquad (0\leq j\leq2i),\qquad i\geq0,
\]
then \(\bigl(\nabla^{2i+1}\Scal\bigr)(p)=0\).
\end{remark}

\begin{proposition}\label{prop:finite-jets-to-radial}
Let $n\geq5$ be odd, and let $(M^n,g)$ be a  Riemannian manifold
with $\Ric\geq0$ near \textit{p}.  Suppose that,
\begin{equation}\label{eq:scalar-jets-radial}
 \nabla^{2i}\Scal(p)=0 \qquad (0\leq 2i\leq n-5),
\end{equation}
and
\begin{equation}\label{eq:Rm-jets-radial}
 \left.\nabla^j\Rm\right|_p=0
 \qquad\left(0\leq j\leq\frac{n-7}{2}\right).
\end{equation}
When $n=5$, condition \eqref{eq:Rm-jets-radial} is empty.  Then the radial
curvature estimates \eqref{eq:radial-assumptions} hold.
\end{proposition}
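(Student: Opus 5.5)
We need to show $R_\theta(t)=O(t^\ell)$ with $\ell=(n-5)/2$, and $\Ric(\partial_r,\partial_r)(t,\theta)=O(t^{n-3})$, both uniformly in $\theta$.

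\emph{The curvature operator.} By the Taylor formula \eqref{eq:radial-curvature-Taylor} with $N=\ell-1$, the coefficients of $t^k$ for $k\le\ell-1=(n-7)/2$ are $(\nabla^k_\theta\Rm)_p(\cdot,\theta)\theta/k!$. These vanish by \eqref{eq:Rm-jets-radial}. Hence $R_\theta(t)=O(t^\ell)$. The remainder is uniform in $\theta$, because the remainder is controlled by $\sup|\nabla^{\ell}\Rm|$ on a compact neighborhood of $p$. When $n=5$ we have $\ell=0$, and $R_\theta(t)=O(1)$ holds trivially by smoothness.

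\emph{The Ricci curvature.} By \eqref{eq:radial-Ricci-Taylor} with the expansion taken to order $n-4$, it suffices to show
\[
\nabla^j\Ric(p)=0\qquad(0\le j\le n-4),
\]
with uniformity again coming from compactness. The plan has three steps.

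First, upgrade the even-order scalar hypotheses to all orders $j\le n-5$. We induct on $i$. Suppose $\nabla^j\Scal(p)=0$ for all $j\le 2i$, where $2i\le n-5$. By \remref{rem:odd-order-jets}, this forces $\nabla^{2i+1}\Scal(p)=0$. Then the hypothesis $\nabla^{2i+2}\Scal(p)=0$ (when $2i+2\le n-5$) continues the induction. Since $n-5$ is even, this gives $\nabla^j\Scal(p)=0$ for all $0\le j\le n-5$.

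Second, apply \lemref{lem:scalar-to-Ricci-jets} with $k=n-5$. This yields $\nabla^j\Ric(p)=0$ for $j\le n-5$.

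Third, since $n-5=2i$ is even, the Ricci part of \remref{rem:odd-order-jets} gives $\nabla^{n-4}\Ric(p)=0$. Therefore $\Ric(\partial_r,\partial_r)(t,\theta)=O(t^{n-3})$.

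\emph{Main obstacle.} The delicate point is the repeated use of the odd-order remark. It relies on the nonnegativity argument of \lemref{lem:scalar-to-Ricci-jets} applied along geodesics through $p$ for both signs of $t$, and on the symmetrization of the covariant derivative slots at $p$. That symmetrization requires the lower-order vanishing to kill the commutator terms. Curvature is not assumed flat, but those commutators are lower-order Ricci jets contracted against $\Rm$, and these vanish at $p$ by the induction hypothesis.

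The scalar version of the remark works the same way, applied to $F(t)=\Scal(\gamma_\theta(t))\ge0$. The odd-order Taylor coefficient must vanish, so $\nabla^{2i+1}_\theta\Scal(p)=0$ for every $\theta$. For a function, $\nabla^{2i+1}\Scal$ is symmetric at $p$ modulo curvature terms involving lower derivatives of $\Scal$, which vanish. Polarization in $\theta$ then gives $\nabla^{2i+1}\Scal(p)=0$.
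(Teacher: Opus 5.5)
Your proof is correct and follows the paper's argument. For $R_\theta$ you truncate \eqref{eq:radial-curvature-Taylor} at order $\ell-1$ and use the $\Rm$-jet hypotheses; for the radial Ricci term you combine \remref{rem:odd-order-jets} with \lemref{lem:scalar-to-Ricci-jets} to get $\nabla^j\Ric(p)=0$ for $j\le n-4$. The only difference is cosmetic: the paper uses the scalar half of the remark to push the scalar jets to order $n-4$ and then applies the lemma with $k=n-4$, whereas you apply the lemma with $k=n-5$ and get the top order $n-4$ from the Ricci half of the remark.
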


\begin{proof}
By the scalar-curvature hypothesis,
\[
 \nabla^{2i}\Scal(p)=0 \qquad (0\leq 2i\leq n-5).
\]
Since $n-5$ is even, successive applications of
\remref{rem:odd-order-jets} also give
\[
 \nabla^j\Scal(p)=0 \qquad (0\leq j\leq n-4).
\]
By \lemref{lem:scalar-to-Ricci-jets},
\[
 \left.\nabla^i\Ric\right|_p=0 \qquad (0\leq i\leq n-4).
\]
By \eqref{eq:radial-Ricci-Taylor},
\begin{equation}\label{eq:radial-Ricci-from-jets}
\begin{aligned}
 \Ric(\partial_r,\partial_r)(t,\theta)
 &=\sum_{k=0}^{n-4}\frac{t^k}{k!}
   \bigl(\nabla_\theta^k\Ric\bigr)_p(\theta,\theta)
   +O(t^{n-3})\\
 &=O(t^{n-3}).
\end{aligned}
\end{equation}
Put $\ell=(n-5)/2$.  If $n=5$, then $\ell=0$, and the smoothness of
$R_\theta(t)$ in $t$ gives
\[
 R_\theta(t)=O(1)=O(t^\ell).
\]
If $n\geq7$, by \eqref{eq:radial-curvature-Taylor},
\begin{equation}\label{eq:radial-curvature-from-jets}
 R_\theta(t)
 =\sum_{k=0}^{\ell-1}\frac{t^k}{k!}
   \bigl(\nabla_\theta^k\Rm\bigr)_p(\,\cdot\,,\theta)\theta
   +O(t^\ell)
 =O(t^\ell).
\end{equation}
By \eqref{eq:radial-Ricci-from-jets}, together with
\eqref{eq:radial-curvature-from-jets}, this is precisely
\eqref{eq:radial-assumptions}.
\end{proof}

Let \(\cH_1\) denote the space of degree-one spherical harmonics on
\(\Sph^{n-1}\).  The notation \(a\perp\cH_1\) means orthogonality with
respect to the standard \(L^2(\Sph^{n-1})\) inner product; equivalently,
\begin{equation}\label{eq:H1-orthogonality}
 \int_{\Sph^{n-1}}a(\theta)\theta_i\,\dd\theta=0
 \qquad(1\leq i\leq n).
\end{equation}

\begin{proposition}\label{prop:density-criterion}
Let $n\geq3$,$(M^n,g)$ be a smooth Riemannian manifold
.  Suppose that, in geodesic polar coordinates at $p$, for
some \(a\in C^\infty(\Sph^{n-1})\),
\begin{equation}\label{eq:density-hypothesis}
 \Delta r=\frac{n-1}{r}+a(\theta)r^{n-2}+O(r^{n-1}),
 \qquad a\perp\cH_1.
\end{equation}
Here and below the remainder is uniform in \(\theta\).
Then every Green function with principal singularity $r^{2-n}$ satisfies
\begin{equation}\label{eq:density-conclusion}
G(p,x) = r^{2-n} + H_p + O_1(r).
\end{equation}
Here $H_p$ is a constant.
\end{proposition}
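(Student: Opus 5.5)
Our approach is to build an explicit approximate Green function with the right singularity, correct its error by solving a spherical equation, and then use removable-singularity arguments to show that what remains is smooth.

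\emph{Step 1: the error of $r^{2-n}$.} In geodesic polar coordinates, for a radial function $h(r)$ we have $\Delta h=h''+h'\Delta r$. Applying this to $r^{2-n}$ and using \eqref{eq:density-hypothesis} gives
\[
 \Delta r^{2-n}=(2-n)(1-n)r^{-n}+(2-n)r^{1-n}\Bigl(\frac{n-1}{r}+a(\theta)r^{n-2}+O(r^{n-1})\Bigr)=(2-n)a(\theta)r^{-1}+O(1).
\]
The $O(1)$ remainder is bounded, so it lies in $L^q$ for every $q$. The $r^{-1}$ term is in $L^q$ only for $q<n$, which is borderline for $C^1$ control. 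This is the critical error that must be removed by hand.

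\emph{Step 2: the corrector.} We look for $w=r\phi(\theta)$ with $\Delta w=(n-2)a(\theta)r^{-1}+O(1)$. For the Euclidean Laplacian,
\[
 \Delta_{\Euc}(r\phi)=r^{-1}\bigl(\Delta_{\Sph^{n-1}}\phi+(n-1)\phi\bigr).
\]
The operator $\Delta_{\Sph^{n-1}}+(n-1)$ has kernel exactly $\cH_1$, since $k(k+n-2)=n-1$ forces $k=1$. Because $a\perp\cH_1$, the equation
\[
 \Delta_{\Sph^{n-1}}\phi+(n-1)\phi=(n-2)a
\]
has a smooth solution $\phi$, which we normalize by $\phi\perp\cH_1$.

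Next we compare $\Delta_g$ with the Euclidean model on $r\phi$. In normal coordinates $g_{ij}=\delta_{ij}+O(r^2)$ and $\Gamma=O(r)$. Hence $\Delta_g w-\Delta_{\Euc}w$ consists of terms $O(r^2)\partial^2w+O(r)\partial w=O(r)$. It follows that
\[
 \Delta_g(r^{2-n}+w)=F
\]
with $F=O(1)$ bounded near $p$. The function $w$ is $O_1(r)$, since $w$ is homogeneous of degree one with a smooth angular factor.

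\emph{Step 3: removability and regularity of the remainder.} Let $\chi$ be a cutoff near $p$ and set $u=G-\chi(r^{2-n}+w)$. Then $\Delta u=-\chi F+(\text{smooth terms supported away from }p)$ on the punctured ball. The delta mass is cancelled because $\int_{\partial B_\varepsilon}\partial_\nu(r^{2-n}+w)\to-(n-2)|\Sph^{n-1}|$, which uses $\Delta r=\frac{n-1}{r}+O(r^{n-2})$ and the fact that $\partial_\nu w=O(1)$ contributes only $O(\varepsilon^{n-1})$.

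We must also know a priori that $G-r^{2-n}=o(r^{2-n})$, so that the singularity of $u$ is removable. This comes from the normalization together with standard isolated-singularity theory (Gilbarg--Serrin). Then $u$ is a distributional solution of $\Delta u=f$ on the whole ball, with $f\in L^\infty$. Elliptic $W^{2,q}$ estimates for all $q<\infty$ and Sobolev embedding give $u\in C^{1,\alpha}$. Setting $H_p=u(p)$, we get $u=H_p+O_1(r)$. Combining,
\[
 G=r^{2-n}+H_p+w+(u-H_p)=r^{2-n}+H_p+O_1(r).
\]

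\emph{Main obstacle.} The delicate step is justifying that $\Delta_g(r\phi)-\Delta_{\Euc}(r\phi)$ and the $O(r^{n-1})$ error of $\Delta r$ produce a genuinely bounded $F$. In particular, the uniform-in-$\theta$ remainder in \eqref{eq:density-hypothesis} must be a legitimate function, not merely pointwise along rays. The cleanest route is to compute $\Delta_g(r\phi)$ in polar form:
\[
 \Delta_g(r\phi)=\phi\,\Delta r+r^{-1}\Delta_{\partial B_r}\phi .
\]
Here the spherical Laplacian on the geodesic sphere $\partial B_r$ equals $r^{-2}\bigl(\Delta_{\Sph^{n-1}}+O(r^2)\bigr)$. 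Using $\Delta r=(n-1)/r+O(r^{n-2})$ in this formula then gives the claimed $O(1)$ error directly.
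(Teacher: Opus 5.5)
Your proposal is correct and is essentially the paper's proof. It has the same critical error $(2-n)a(\theta)r^{-1}$, the same corrector $r\phi$ with $L\phi=(n-2)a$ solvable because $a\perp\cH_1$, a bounded remainder $F$, and $W^{2,q}$ regularity producing a $C^{1,\alpha}$ correction. The paper packages this as a local fundamental solution $\Phi=r^{2-n}+r\phi+z$ and applies Weyl's lemma to $G-\Phi$, whereas you subtract the cut-off parametrix from $G$ directly; that is equivalent, and the distributional normalization already makes the Gilbarg--Serrin appeal unnecessary. One cosmetic slip: in your closing polar formula the angular term should be $r\,\Delta_{\partial B_r}\phi$, or $r^{-1}\Delta_{h_r}\phi$ for the rescaled sphere metric, but your Step~2 normal-coordinate comparison already gives $\Delta_g(r\phi)=r^{-1}L\phi+O(r)$ correctly.
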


\begin{proof}
For a radial power,
\begin{equation}\label{eq:Delta-r-power}
 \Delta r^{2-n}
 =(2-n)r^{1-n}\left(\Delta r-\frac{n-1}{r}\right).
\end{equation}
Thus \eqref{eq:density-hypothesis} gives
\begin{equation}\label{eq:Delta-pole-error}
 \Delta r^{2-n}=(2-n)a(\theta)r^{-1}+O(1).
\end{equation}
On the unit sphere, the operator
\begin{equation}\label{eq:L-def}
 L=\Delta_{\Sph^{n-1}}+(n-1)
\end{equation}
has kernel \(\cH_1\).  In fact, \(\cH_1\) consists of the restrictions to
\(\Sph^{n-1}\subset\R^n\) of the coordinate functions on \(\R^n\).  These
are precisely the first eigenfunctions of \(-\Delta_{\Sph^{n-1}}\), with
eigenvalue \(n-1\).  Hence \(\ker L=\cH_1\).  Since \(L\) is
self-adjoint elliptic on the compact
sphere, the Fredholm alternative shows that \(L\phi=(n-2)a\) is solvable
precisely when its right-hand side is orthogonal to \(\ker L\).  The
assumption \(a\perp\cH_1\) therefore permits a solution \(\phi\),
unique after imposing \(\phi\perp\cH_1\), of
\stepcounter{equation}
\begin{equation}\label{eq:phi-equation}
 L\phi=(n-2)a.
\end{equation}
Here \(\phi=\phi(\theta)\in C^\infty(\Sph^{n-1})\); in particular,
\(\phi\) depends only on the angular variable \(\theta\).

By the smoothness of the metric, in geodesic polar coordinates,
\begin{equation}\label{eq:polar-metric}
 g=\dd r^2+r^2h_r,
 \qquad
 h_r=g_{\Sph^{n-1}}+O_{C^1}(r^2).
\end{equation}
Here $O_{C^1}(r^2)$ means that both $h_r-g_{\Sph^{n-1}}$ and its first
derivatives with respect to the spherical coordinates $\theta$ are
$O(r^2)$.
The $C^1$ control is important because the Laplace--Beltrami
operator differentiates the metric coefficients once.  From its coordinate
formula,
\begin{align*}
 \Delta_{h_r}\phi
 &=\frac1{\sqrt{\det h_r}}
 \partial_a\bigl(\sqrt{\det h_r}\,h_r^{ab}\partial_b\phi\bigr)\\
 &=\frac1{\sqrt{\det g_{\Sph^{n-1}}}}\bigl(1+O(r^2)\bigr)
 \left[
 \partial_a\bigl(\sqrt{\det g_{\Sph^{n-1}}}\,
 g_{\Sph^{n-1}}^{ab}\partial_b\phi\bigr)
 +O(r^2)
 \right]\\
 &=\frac1{\sqrt{\det g_{\Sph^{n-1}}}}
 \partial_a\bigl(\sqrt{\det g_{\Sph^{n-1}}}\,
 g_{\Sph^{n-1}}^{ab}\partial_b\phi\bigr)
 +O(r^2).
\end{align*}
\eqref{eq:polar-metric} gives
\begin{equation}\label{eq:angular-laplacian-expansion}
 \Delta_{h_r}\phi=\Delta_{\Sph^{n-1}}\phi+O(r^2).
\end{equation}
For a general function $u(r,\theta)$,
\[
 \Delta_gu=\partial_r^2u+(\Delta r)\partial_ru
 +r^{-2}\Delta_{h_r}u.
\]
Applying this to $u=r\phi(\theta)$, and using
\(\Delta r=(n-1)/r+O(r)\) together with
\eqref{eq:angular-laplacian-expansion}, yields the sharper estimate
\begin{equation}\label{eq:Delta-rphi}
 \Delta_g(r\phi)=r^{-1}L\phi+O(r),
\end{equation}
Equations \eqref{eq:Delta-pole-error}, \eqref{eq:phi-equation}, and
\eqref{eq:Delta-rphi} show that
\[
 F:=\Delta_g(r^{2-n}+r\phi)
\]
is bounded in a punctured normal ball.  Extend $F$ arbitrarily at $p$,
choose a small normal ball $B_\varepsilon(p)$, and solve the weak Dirichlet
problem
\[
 \Delta z=-F\quad\text{in }B_\varepsilon(p),
 \qquad z=0\quad\text{on }\partial B_\varepsilon(p).
\]
For every finite $s$, elliptic estimates give $z\in W^{2,s}$; see
\cite[Chapters~8--9]{GilbargTrudinger}.  Taking $s>n$ and applying Sobolev
embedding yields $z\in C^{1,\alpha}$ for some \(\alpha>0\).

The function \(r\phi\in W^{1,\infty}_{\mathrm{loc}}\): indeed
\(|r\phi|\leq Cr\), and its radial and angular gradient components are
uniformly bounded.  Its normal flux through \(\partial B_\delta(p)\) is
therefore $O(\delta^{n-1})$, so it carries no point mass at $p$.  The
same is true of $z$, since $z\in C^1$.  Integrating by parts on
\(B_\varepsilon\setminus B_\delta\) and sending \(\delta\downarrow0\), the
normalized flux of $r^{2-n}$ gives
\[
 -\Delta\Phi=(n-2)|\Sph^{n-1}|\delta_p,
 \qquad
 \Phi:=r^{2-n}+r\phi+z.
\]
Thus $\Phi$ is a local fundamental solution with normalization
\eqref{eq:green-normalization}.  Moreover,
\begin{equation}\label{eq:Phi-expansion}
 \Phi=r^{2-n}+z(p)+O_1(r),
\end{equation}
because $r\phi=O_1(r)$ and
\(z-z(p)=O_1(r)\).  Notice that $r\phi$ need not be $C^1$ at $p$;
the notation $O_1(r)$ only asserts the punctured-ball estimates in its
definition in Section~\ref{sec:colding}.

If $G$ is any Green function with the same normalization, set
\[
 \mu:=G-\Phi.
\]
The function \(\mu\) is distributionally harmonic in the whole ball
\(B_\varepsilon(p)\).  By Weyl's lemma \cite{Jost},
\(\mu\in C^\infty(B_\varepsilon(p))\).  Consequently,
\[
 \mu=\mu(p)+O_1(r).
\]
Combining this with \eqref{eq:Phi-expansion}, we obtain
\begin{align*}
 G
 &=\Phi+\mu\\
 &=r^{2-n}+z(p)+\mu(p)+O_1(r)\\
 &=r^{2-n}+H_p+O_1(r),
 \qquad H_p:=z(p)+\mu(p),
\end{align*}
which proves \eqref{eq:density-conclusion}.
\end{proof}

\begin{proof}[Proof of \thmref{thm:pure-pole}]
For $n\geq5$, the local argument in
\propref{prop:finite-jets-to-radial} gives
\eqref{eq:radial-assumptions}, and \lemref{lem:critical-Jacobi} gives
\eqref{eq:Delta-r-critical}. $A(\theta)$ is smooth and even on
$\Sph^{n-1}$, and hence orthogonal to $\cH_1$. For $n=3$,
\lemref{D-r} gives \eqref{eq;D-r}. Since $\Ric_p(\theta,\theta)$ is smooth
and even on $\Sph^{n-1}$, $\Ric_p(\theta,\theta)$ is orthogonal to
$\cH_1$. In both cases, \propref{prop:density-criterion} gives the
expansion
\[
 G(p,x)=r^{2-n}+H_p+O_1(r)
\]
then
\[
 |\nabla b|^2
 =1-\frac{2(n-1)}{n-2}H_pr^{n-2}+O(r^{n-1}),
\]
and hence
\[
 |\nabla b|^2-1
 =-\frac{2(n-1)}{n-2}H_pr^{n-2}+O(r^{n-1}).
\]
 When $\mathrm{Ric} \ge 0 $ and
$G$ is the normalized minimal positive Green function,
\propref{prop:Colding}(iv) implies that $H_p\geq0$.
\end{proof}

\section{Proof of Theorem 1.3}\label{sec:global}

\begin{lemma}\label{lem:gradb-expansion}
Let $n\geq3$ be odd ,$(M^{n},g)$ be a complete nonparabolic Riemannian manifold with nonnegative Ricci curvature
.  Suppose that \(|\nabla b|^2\) extends across the
pole $p$ as a $C^{n-2}$ function and that
\begin{equation}\label{eq:G-H-O1}
 G(p,x)=r^{2-n}+H+O_1(r).
\end{equation}
Then
\begin{equation}\label{eq:gradb-strong-decay}
 H=0,
 \qquad |\nabla b|^2=1+O(r^{n-1}),
\end{equation}
and
\begin{equation}\label{eq:small-defect-limit}
 \lim_{s\downarrow0}s^{3-n}A'(s)=0.
\end{equation}
\end{lemma}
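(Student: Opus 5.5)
We would argue in three steps: expand $|\nabla b|^2$ from \eqref{eq:G-H-O1}, use the odd exponent $n-2$ together with $C^{n-2}$ regularity to force $H=0$, and then read off the limit \eqref{eq:small-defect-limit} from Colding's formulas.

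\emph{Step 1: expansion of $|\nabla b|^2$.}
Write $G=r^{2-n}(1+Hr^{n-2}+E)$ with $E=O_1(r^{n-1})$. This is legitimate because $r^{n-2}\cdot O_1(r)=O_1(r^{n-1})$. Then
\[
 b=G^{-1/(n-2)}=r\bigl(1-\tfrac{1}{n-2}Hr^{n-2}+O_1(r^{n-1})\bigr).
\]
We compute the gradient directly from $|\nabla G|^2$ rather than differentiating $b$. One has $\nabla G=(2-n)r^{1-n}\nabla r+\nabla(O_1(r))$ with $|\nabla r|=1$. Since $b$ is a function of $G$,
\[
 |\nabla b|^2=(n-2)^{-2}G^{-2(n-1)/(n-2)}|\nabla G|^2 .
\]
Also $|\nabla G|^2=(n-2)^2r^{2-2n}\bigl(1+O(r^{n-1})\bigr)$, because the cross term is bounded by $r^{1-n}\cdot C$. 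Finally,
\[
 G^{-2(n-1)/(n-2)}=r^{2n-2}\Bigl(1-\tfrac{2(n-1)}{n-2}Hr^{n-2}+O(r^{n-1})\Bigr).
\]
Together these give the expansion \eqref{eq:gradb-intro-expansion}.

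\emph{Step 2: forcing $H=0$.}
Let $f=|\nabla b|^2$, which is $C^{n-2}$ near $p$ by hypothesis. In normal coordinates, Taylor's theorem gives $f(x)=P(x)+o(r^{n-2})$ with $P$ a polynomial of degree at most $n-2$. Compare this with Step 1 along the rays $t\mapsto\exp_p(t\theta)$.
\begin{itemize}
\item By uniqueness of asymptotic expansions in $t$, the homogeneous parts of $P$ of degrees $1,\dots,n-3$ vanish identically.
\item The degree-$(n-2)$ part satisfies $P_{n-2}(\theta)=-\frac{2(n-1)}{n-2}H$ for every $\theta$.
\end{itemize}
Since $n-2$ is odd, $P_{n-2}(-\theta)=-P_{n-2}(\theta)$, so the constant $-\frac{2(n-1)}{n-2}H$ equals its own negative. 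Hence $H=0$, and Step 1 then yields $|\nabla b|^2=1+O(r^{n-1})$.

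\emph{Step 3: the limit \eqref{eq:small-defect-limit}.}
Using \eqref{eq:Iv-prime} with $v=|\nabla b|^2$, we get $A'(s)=s^{1-n}\int_{\{b=s\}}(|\nabla b|^2)_n\dd\sigma$, where $v_n=\langle\nabla v,\nabla b/|\nabla b|\rangle$. So we need the gradient bound $|\nabla|\nabla b|^2|=O(r^{n-2})$.

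This is the main obstacle, because the $O_1$ notation only controls first derivatives of $G$, while $|\nabla b|^2$ involves second derivatives. We would try two routes in order.
\begin{itemize}
\item \emph{First route: regularity.} Since $f\in C^{n-2}$ with $n-2\ge1$, $\nabla f$ is continuous and is given by the Taylor polynomial $\nabla P$, which vanishes through degree $n-3$ by Step 2. This gives $\nabla f=o(r^{n-3})$.
\item \emph{Upgrade.} To reach $O(r^{n-2})$, use that $f-1=O(r^{n-1})$ and $f\in C^{n-2}$. Taylor's theorem for $\nabla f$ shows its expansion has vanishing coefficients through order $n-3$, with remainder $o(r^{n-3})$.
\end{itemize}

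Even the weaker bound suffices. On $\{b=s\}$ we have $r\sim s$, and the area is $\sim s^{n-1}$ by \eqref{eq:I1} and $|\nabla b|\to1$. Hence
\[
 |A'(s)|\le s^{1-n}\cdot o(s^{n-3})\cdot Cs^{n-1}=o(s^{n-3}),
\]
and therefore $s^{3-n}A'(s)\to0$. If regularity must instead come from the equation, the fallback is a Schauder estimate on annuli for the harmonic remainder $G-r^{2-n}$. That estimate would give $\nabla f=O(r^{n-2})$ directly from $f-1=O(r^{n-1})$ via rescaling.
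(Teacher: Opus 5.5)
Your proof is correct. Steps 1 and 2 are essentially the paper's argument: the paper compares the expansion for both signs of $t$ along a geodesic line through $p$, while you use the odd homogeneity of the degree-$(n-2)$ Taylor term, which is the same observation.

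Step 3, however, takes a genuinely different route. The paper never differentiates $|\nabla b|^2$ there. It uses the third monotonicity formula \eqref{eq:third-monotonicity} to see that $s^{3-n}A'(s)$ is nondecreasing, and $A'\le0$ from \propref{prop:Colding}(ii). If the limit were nonzero, one would get $A'(t)\le -Ct^{n-3}$ for small $t$, hence $A(s)-|\Sph^{n-1}|\le -\frac{C}{n-2}s^{n-2}$. This contradicts the zeroth-order bound $A(s)-|\Sph^{n-1}|=s^{1-n}\int_{\{b=s\}}(|\nabla b|^2-1)|\nabla b|\dd\sigma=O(s^{n-1})$, which comes from $|\nabla b|^2=1+O(r^{n-1})$.

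You instead write $A'=I'_{|\nabla b|^2}$ via \eqref{eq:Iv-prime} and estimate the integrand pointwise. Once $H=0$, the whole order-$(n-2)$ Taylor polynomial of $|\nabla b|^2$ at $p$ is the constant $1$. The Peano remainder for $\nabla|\nabla b|^2\in C^{n-3}$ then gives $\nabla|\nabla b|^2=o(r^{n-3})$. Together with the area bound (at most $2|\Sph^{n-1}|s^{n-1}$ for small $s$, from \eqref{eq:I1} and $|\nabla b|\to1$) and $r\sim s$ on $\{b=s\}$, this yields $A'(s)=o(s^{n-3})$.

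What each route buys:
\begin{itemize}
\item Your route is direct and does not use the sign structure of Colding's monotonicity, so $\Ric\ge0$ plays no role in this step. The price is that it spends the $C^{n-2}$ hypothesis at the derivative level.
\item The paper's route needs only the $C^0$ information $|\nabla b|^2-1=O(r^{n-1})$. It would therefore still work if that bound were known without any derivative control, at the cost of invoking the monotonicity formulas.
\end{itemize}

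Two side remarks. Your \emph{Upgrade} bullet does not actually reach $O(r^{n-2})$; it only restates the $o(r^{n-3})$ bound. Your Schauder fallback treats $G-r^{2-n}$ as harmonic, which it is not on a curved manifold. Since you correctly observe that $o(r^{n-3})$ already suffices, neither point affects the proof.
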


\begin{proof}
Since $b=G^{1/(2-n)}$, expansion \eqref{eq:G-H-O1} gives
\begin{equation}\label{eq:b-expansion}
 b=r-\frac{H}{n-2}r^{n-1}+O_1(r^n).
\end{equation}
Consequently,
\begin{equation}\label{eq:gradb-expansion}
 |\nabla b|^2
 =1-\frac{2(n-1)}{n-2}Hr^{n-2}+O(r^{n-1}).
\end{equation}

Fix a unit vector $\theta\in T_pM$ and define
\[
 f_\theta(t)=|\nabla b|^2(\exp_p(t\theta))-1.
\]
Because \(|\nabla b|^2\in C^{n-2}\), the function $f_\theta$ is $C^{n-2}$
in $t$.  By \eqref{eq:gradb-expansion},
\begin{equation}\label{eq:signed-geodesic-expansion}
 f_\theta(t)=-\frac{2(n-1)}{n-2}H|t|^{n-2}+O(|t|^{n-1}).
\end{equation}
On the other hand, the Taylor expansion of $f_\theta$ has the form
\begin{equation}\label{eq:signed-geodesic-Taylor}
 f_\theta(t)=a_\theta t^{n-2}+o(|t|^{n-2}).
\end{equation}
Indeed, the estimate $f_\theta(t)=O(|t|^{n-2})$ forces every Taylor coefficient
of degree below $n-2$ to vanish.  Since $n-2$ is odd, comparison of
\eqref{eq:signed-geodesic-expansion} and
\eqref{eq:signed-geodesic-Taylor}, first for $t>0$ and then for $t<0$,
gives
\[
 -\frac{2(n-1)}{n-2}H=0.
\]
Thus $H=0$, and \eqref{eq:gradb-expansion} yields
\[
 |\nabla b|^2=1+O(r^{n-1}).
\]

We now prove that
\[
\lim_{s\to 0}s^{3-n}A'(s)=0.
\]

By \eqref{eq:third-monotonicity}, the function
\[
s^{3-n}A'(s)
\]
is monotonically increasing. On the other hand, by
\propref{prop:Colding}(ii),
\(A(s)\) is monotonically decreasing. Hence
\[
s^{3-n}A'(s)\le 0.
\]

Suppose, to the contrary, that
\[
\lim_{s\to 0}s^{3-n}A'(s)\neq 0.
\]
Then there exist \(s_0>0\) and \(C>0\) such that
\[
s_0^{3-n}A'(s_0)=-C<0.
\]
Since \(s^{3-n}A'(s)\) is monotonically increasing, for every
\(0<t\le s_0\) we have
\[
t^{3-n}A'(t)\le -C.
\]
Therefore,
\[
A'(t)\le -Ct^{n-3}.
\]

By \propref{prop:Colding}(v),
\[
\lim_{t\to 0}A(t)=|S^{n-1}|.
\]
It follows that
\[
A(s)-|S^{n-1}|
=
\int_0^s A'(t)\,dt
\le
-C\int_0^s t^{n-3}\,dt
=
-\frac{C}{n-2}s^{n-2}
<0.
\]
Hence
\begin{equation}\label{eq:ASC}
  \left|A(s)-|S^{n-1}|\right|
\ge
\frac{C}{n-2}s^{n-2}.
\end{equation}

On the other hand, by \eqref{eq:I1} and \eqref{eq:A-def}, and
$b/r\to1$ as $r\to0$,
\begin{align*}
A(s)-|S^{n-1}|
&=
I_{|\nabla b|^2}(s)-I_1(s)\\
&=
s^{1-n}\int_{b=s}
\bigl(|\nabla b|^2-1\bigr)|\nabla b|\,dA\\
&=
O(s^{n-1})\,s^{1-n}
\int_{b=s}|\nabla b|\,dA\\
&=
O(s^{n-1})\,|S^{n-1}|\\
&=
O(s^{n-1}).
\end{align*}
This contradicts \eqref{eq:ASC}. Therefore,
\[
  \lim_{s\to 0}s^{3-n}A'(s)=0.
\]
\end{proof}

\begin{proof}[Proof of \thmref{thm:odd-rigidity}]
\thmref{thm:pure-pole} gives
\[
 G=r^{2-n}+H_p+O_1(r).
\]
\lemref{lem:gradb-expansion} gives
\(\lim_{s\downarrow0}s^{3-n}A'(s)=0\).  \lemref{lem:zero-defect}
then yields \((M^n,g)\cong\R^n\).
\end{proof}

\begin{corollary}\label{cor:dimension-three}
Let $(M^3,g)$ be a complete Riemannian manifold, nonparabolic, with
$\Ric\geq0$.  If
\[
 |\nabla b|^2\in C^1\text{ near }p,
\]
then $(M^3,g)$ is isometric to $\R^3$.
\end{corollary}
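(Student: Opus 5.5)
This is the case $n=3$ of \thmref{thm:odd-rigidity}, and I will obtain it by checking that theorem's hypotheses. For $n=3$ the curvature conditions \eqref{eq:curvature-jet-conditions} are vacuous: the range $0\le 2i\le n-5=-2$ is empty, and so is $0\le j\le (n-7)/2=-2$. The regularity threshold $C^{n-2}$ equals $C^1$, which is exactly the hypothesis. So the plan is to verify the ingredients of the proof of \thmref{thm:odd-rigidity} directly in dimension three rather than rely on the general statement.

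\textbf{Step 1 (pole expansion).} \lemref{D-r} gives
\[
 \Delta r=\frac2r-\frac13\Ric_p(\theta,\theta)\,r+O(r^2).
\]
This has the form \eqref{eq:density-hypothesis} with $a(\theta)=-\tfrac13\Ric_p(\theta,\theta)$, since $r^{n-2}=r$ when $n=3$. The function $a$ is a quadratic form restricted to $\Sph^2$, so it is even and hence orthogonal to $\cH_1$. \propref{prop:density-criterion} then yields
\[
 G(p,x)=r^{-1}+H_p+O_1(r).
\]
This is exactly \thmref{thm:pure-pole} in dimension three.

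\textbf{Step 2 (killing $H_p$).} Apply \lemref{lem:gradb-expansion} with $n=3$. From $b=G^{-1}$ we get $b=r-H_pr^2+O_1(r^3)$, and therefore
\[
 |\nabla b|^2=1-4H_pr+O(r^2).
\]
Restrict $|\nabla b|^2-1$ to a full geodesic line $t\mapsto\exp_p(t\theta)$, $t\in(-\varepsilon,\varepsilon)$. Because the function is $C^1$ and vanishes at $t=0$, it equals $ct+o(|t|)$ for some constant $c$. Comparing with $-4H_p|t|+O(t^2)$ for $t>0$ and for $t<0$ forces $c=-4H_p=4H_p$, so $H_p=0$. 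Then $|\nabla b|^2=1+O(r^2)$, and by the contradiction argument in \lemref{lem:gradb-expansion} we get
\[
 A(s)-|\Sph^2|=O(s^2).
\]
This is incompatible with $A'(s)\le -C$ near $0$, which would force $A(s)-|\Sph^2|\le -Cs$. Hence $\lim_{s\downarrow0}A'(s)=0$; note that $s^{3-n}=1$ here.

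\textbf{Step 3 (rigidity).} \lemref{lem:zero-defect} now gives $M\cong\R^3$.

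The only delicate point is the $O_1$ derivative control in Step 2, which is what passes the expansion of $G$ to $|\nabla b|^2$. It is provided by \propref{prop:density-criterion}: we need $\nabla b=\nabla r\,(1-2H_pr)+O(r^2)$, which follows from $\nabla(r^{-1}+H_p+O_1(r))$ after factoring out $G^{-2}$. I expect this step to go through routinely.
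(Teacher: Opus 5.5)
Your proposal is correct and follows the paper's route. The paper's proof of this corollary simply invokes \thmref{thm:odd-rigidity} with $n=3$, where the curvature conditions are vacuous and $C^{n-2}=C^1$; your Steps 1--3 unpack that theorem's proof in dimension three through \lemref{D-r}, \propref{prop:density-criterion}, \lemref{lem:gradb-expansion} and \lemref{lem:zero-defect}, and your specialized computations ($b=r-H_pr^2+O_1(r^3)$, $|\nabla b|^2=1-4H_pr+O(r^2)$, and the two-sided comparison forcing $H_p=0$) check out.
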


\begin{proof}
This follows directly from \thmref{thm:odd-rigidity}.
\end{proof}

\begin{corollary}\label{cor:dimension-five}
Let $(M^5,g)$ be complete and nonparabolic with $\Ric\geq0$.  If, at the
Green pole $p$,
\[
 \Scal(p)=0
 \qquad\text{and}\qquad
 |\nabla b|^2\in C^3\text{ near }p,
\]
then $(M^5,g)$ is isometric to $\R^5$.
\end{corollary}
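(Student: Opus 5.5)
This is a direct specialization of \thmref{thm:odd-rigidity} to $n=5$, so the plan is to check that its hypotheses \eqref{eq:curvature-jet-conditions} reduce exactly to $\Scal(p)=0$ in dimension five. With $n=5$, the first family of conditions $\nabla^{2i}\Scal(p)=0$ ranges over $0\leq 2i\leq n-5=0$. It therefore consists only of $\Scal(p)=0$, which is assumed. The second family $\nabla^j\Rm|_p=0$ ranges over $0\leq j\leq (n-7)/2=-1$, so it is empty, as the statement of \thmref{thm:pure-pole} already notes. The regularity threshold $C^{n-2}$ becomes $C^3$, which matches the hypothesis $|\nabla b|^2\in C^3$ near $p$.

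With these identifications, \thmref{thm:odd-rigidity} applies directly and gives $(M^5,g)\cong\R^5$. For the reader, it is worth tracing the chain of results. \propref{prop:finite-jets-to-radial} with $\ell=0$ uses \lemref{lem:scalar-to-Ricci-jets} and \remref{rem:odd-order-jets} to upgrade $\Scal(p)=0$ and $\Ric\geq0$ to $\Ric(p)=0$ and $\nabla\Ric(p)=0$. This yields $\Ric(\partial_r,\partial_r)=O(t^2)$, while $R_\theta=O(1)$ holds trivially. \lemref{lem:critical-Jacobi} then gives $\Delta r=4/r-\tfrac15A(\theta)r^3+O(r^4)$ with $A$ even. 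Next, \propref{prop:density-criterion} gives $G=r^{-3}+H_p+O_1(r)$. \lemref{lem:gradb-expansion}, using the oddness of the exponent $3$, forces $H_p=0$ and $\lim_{s\downarrow0}s^{-2}A'(s)=0$. Finally, \lemref{lem:zero-defect} concludes.

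There is no real obstacle here. The only point needing care is the index bookkeeping: one should confirm that $\nabla\Scal(p)=0$ need not be assumed separately, since \remref{rem:odd-order-jets} supplies it from $\Scal(p)=0$ and $\Ric\geq0$. Since $\nabla^j\Ric(p)=0$ is required for $j\leq n-4=1$, this is exactly the range covered by the argument.
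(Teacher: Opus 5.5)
Your proposal is correct and follows the paper's route: the paper likewise proves this by observing that, for $n=5$, the conditions \eqref{eq:curvature-jet-conditions} reduce to $\Scal(p)=0$ and the regularity threshold $C^{n-2}$ becomes $C^3$, and then it invokes \thmref{thm:odd-rigidity}. Your expanded trace through \propref{prop:finite-jets-to-radial}, \lemref{lem:critical-Jacobi}, \propref{prop:density-criterion}, \lemref{lem:gradb-expansion} and \lemref{lem:zero-defect} is accurate, including the point that $\nabla\Scal(p)=0$ comes for free.
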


\begin{proof}
All the conditions in \eqref{eq:curvature-jet-conditions} hold for $n=5$.  \thmref{thm:odd-rigidity}
therefore yields $(M^5,g)\cong\R^5$.
\end{proof}

\begin{corollary}\label{cor:dimension-seven}
Let $(M^7,g)$ be complete and nonparabolic with $\Ric\geq0$.  Suppose that
\[
 \Rm(p)=0,
 \qquad \Delta\Scal(p)=0,
 \qquad |\nabla b|^2\in C^5\text{ near }p.
\]
Then $(M^7,g)$ is isometric to $\R^7$.
\end{corollary}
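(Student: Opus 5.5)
Corollary~\ref{cor:dimension-seven} follows from \thmref{thm:odd-rigidity} with $n=7$ once the hypotheses are matched against \eqref{eq:curvature-jet-conditions}. For $n=7$ those conditions read
\[
 \nabla^{2i}\Scal(p)=0\quad(0\le 2i\le 2),
 \qquad
 \left.\nabla^j\Rm\right|_p=0\quad(0\le j\le 0),
\]
that is, $\Scal(p)=0$, $\nabla^2\Scal(p)=0$, and $\Rm(p)=0$. The regularity requirement is $|\nabla b|^2\in C^{n-2}=C^5$, which is assumed.

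The conditions $\Rm(p)=0$ and $\Scal(p)=0$ come for free: $\Rm(p)=0$ is assumed, and it gives $\Scal(p)=0$ by tracing.

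The only real step is to upgrade $\Delta\Scal(p)=0$ to the full Hessian condition $\nabla^2\Scal(p)=0$. I would use $\Ric\ge0$ in the manner of \lemref{lem:scalar-to-Ricci-jets} and \remref{rem:odd-order-jets}. Since $\Scal\ge0$ near $p$ and $\Scal(p)=0$, the point $p$ is an interior minimum of $\Scal$. Hence $\nabla\Scal(p)=0$, which is also the $i=0$ case of \remref{rem:odd-order-jets}, and $\nabla^2\Scal(p)$ is a positive semidefinite symmetric form. Its trace is $\Delta\Scal(p)=0$, so $\nabla^2\Scal(p)=0$. This is exactly the trace argument used for $\Ric(p)$ in \lemref{lem:scalar-to-Ricci-jets}, applied now to the Hessian.

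With all of \eqref{eq:curvature-jet-conditions} verified at $p$, \thmref{thm:odd-rigidity} gives $(M^7,g)\cong\R^7$. The one point to state carefully is the semidefiniteness of the Hessian at a minimum, since that is where $\Ric\ge0$ (through $\Scal\ge0$) is used.
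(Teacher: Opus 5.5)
Your proof is correct, and it differs from the paper's only in where the ``nonnegative plus zero trace forces zero'' argument is applied. The paper runs the argument of \lemref{lem:scalar-to-Ricci-jets} on the Ricci tensor to get $\nabla^2\Ric|_p\ge0$, i.e.\ $(\nabla^2_{\theta\theta}\Ric)_p(Y,Y)\ge0$ for all $\theta,Y$. It then uses $0=\Delta\Scal(p)=\tr(\tr\nabla^2\Ric)|_p$ to conclude $\nabla^2\Ric|_p=0$, and reads off $\nabla^2\Scal(p)=0$ from that; making this rigorous also uses the polarization and slot-symmetry step of that lemma. You apply the same mechanism directly to the scalar function: $\Scal\ge0$ has an interior minimum at $p$, so $\nabla^2\Scal(p)$ is positive semidefinite with trace $\Delta\Scal(p)=0$. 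Your route is shorter, needs only $\Scal\ge0$ near $p$, and avoids the polarization step. The paper's route gives the stronger statement $\nabla^2\Ric|_p=0$, but that is not needed here, since \propref{prop:finite-jets-to-radial} recovers the Ricci jets from the scalar jets on its own.
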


\begin{proof}
The argument in the proof of \lemref{lem:scalar-to-Ricci-jets} gives
\[
 \nabla^2\Ric|_p\geq0.
\]
On the other hand,
\[
 0=\Delta\Scal(p)
 =\tr(\nabla^2\Scal)(p)
 =\left.\tr\bigl(\tr\nabla^2\Ric\bigr)\right|_p.
\]
Therefore
\[
 \left.\nabla^2\Ric\right|_p=0.
\]
Consequently, all the conditions in
\eqref{eq:curvature-jet-conditions} hold for $n=7$.
\thmref{thm:odd-rigidity} now yields $(M^7,g)\cong\R^7$.
\end{proof}

\section{Sharpness of the regularity condition and even‑dimensional obstructions}\label{sec:even}

\begin{proposition}
\label{prop:even-counterexamples}
For every $n\geq 3$, there is a complete nonflat rotationally symmetric
metric on $\R^n$ with \(\sec\geq0\) and positive asymptotic volume ratio
such that the metric is Euclidean near its pole $p$ and
\begin{equation}\label{eq:even-pure-pole}
 G(p,x)=r^{2-n}+H,
 \qquad H>0,
\end{equation}
\begin{equation}\label{eq:gradb-even}
 |\nabla b|^2=(1+Hr^{n-2})^{-2\frac{n-1}{n-2}}
\end{equation}
near $p$.
\end{proposition}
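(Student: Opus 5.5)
We construct the example as a rotationally symmetric model \eqref{eq:model-metric} and work with the warping function $f$ directly. Take $f(r)=r$ on $[0,r_0]$, so the metric is Euclidean near the pole. Then increase the radial curvature so that $f$ becomes concave, with $f''\le0$, $f''\not\equiv0$, and $0<f'\le1$. Finally let $f$ become affine, $f(r)=\kappa r+c$ with $0<\kappa<1$, for $r\ge r_1$. A concrete choice is $f'=\psi$, where $\psi$ is a smooth nonincreasing function equal to $1$ on $[0,r_0]$ and to $\kappa$ on $[r_1,\infty)$. With this choice $f$ is odd and smooth near $0$, since it equals $r$ there. By \eqref{eq:model-curvatures}, $K_{\mathrm{rad}}=-f''/f\ge0$ and $K_{\mathrm{tan}}=(1-f'^2)/f^2\ge0$, so $\sec\ge0$. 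The metric is nonflat because $K_{\mathrm{tan}}>0$ for large $r$. It is complete because the radial coordinate has infinite length. Since $f\sim\kappa r$, the asymptotic volume ratio is $\AVR=\kappa^{n-1}>0$.

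Next we identify the Green function. Since $n\ge3$ and $f\ge$ const$\cdot r$ at infinity, the integral $\int^\infty f^{1-n}$ converges. The same exhaustion argument as in \lemref{lem:inverse-construction}, using the Dirichlet Green functions $G_R$ on model balls and their monotone limit, shows that the minimal positive Green function is \eqref{eq:model-green}, $G(r)=(n-2)\int_r^\infty f^{1-n}\dd s$. For $r\le r_0$ we split the integral:
\[
 G(r)=(n-2)\int_r^{r_0}s^{1-n}\dd s+(n-2)\int_{r_0}^\infty f^{1-n}\dd s
 =r^{2-n}+H,
 \qquad
 H=(n-2)\int_{r_0}^\infty\bigl(f^{1-n}-s^{1-n}\bigr)\dd s.
\]
Concavity with $f'(r_0)=1$ gives $f(s)\le s$, and strictly so once $f''<0$ has occurred. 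Hence $f^{1-n}-s^{1-n}\ge0$ and is not identically zero, which gives $H>0$. This establishes \eqref{eq:even-pure-pole}.

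Finally, near $p$ we have $b=(r^{2-n}+H)^{-1/(n-2)}$. Differentiating,
\[
 b'=r^{1-n}(r^{2-n}+H)^{-\frac{n-1}{n-2}}=(1+Hr^{n-2})^{-\frac{n-1}{n-2}}.
\]
Squaring gives \eqref{eq:gradb-even}.

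There is no serious obstacle here. The only care needed is the minimality and normalization of $G$, which is handled by repeating the exhaustion argument of \lemref{lem:inverse-construction} verbatim, and checking $H>0$ through the strict inequality $f<s$ on an interval.
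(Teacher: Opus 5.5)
Your proposal is correct and follows the paper's proof essentially step for step: a warping function $f=\int_0^r\psi$ with $\psi$ nonincreasing from $1$ to a constant in $(0,1)$, sign checks on $K_{\mathrm{rad}}$ and $K_{\mathrm{tan}}$, the Green function identified by exhaustion, the integral split at $r_0$ so that $f\le s$ with strict inequality somewhere gives $H>0$, and the same differentiation of $b$. The only cosmetic differences are that you prove nonflatness from $K_{\mathrm{tan}}>0$ at infinity, whereas the paper uses $f''<0$ somewhere, and that you keep $r_0,r_1,\kappa$ general instead of fixing them as $1,2,a$.
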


\begin{proof}
Fix $a\in(0,1)$.  Choose a smooth nonincreasing function
\(\varphi:[0,\infty)\to[a,1]\) such that \(\varphi=1\) on \([0,1]\),
\(\varphi=a\) on \([2,\infty)\), and \(\varphi'<0\) somewhere.  Set
\[
 f(r)=\int_0^r\varphi(s)\dd s,
 \qquad
 g=\dd r^2+f(r)^2g_{\Sph^{n-1}}.
\]
Then $f=r$ near zero, $f''\leq0$, and $0<a\leq f'\leq1$.
Formula \eqref{eq:model-curvatures} shows that \(\sec\geq0\), and the metric
is nonflat because $f''<0$ somewhere.  It is complete, and since
 \(f(r)=ar+c\) for large $r$, it has positive asymptotic volume ratio
\(a^{n-1}\).

The normalized minimal positive Green function is
\[
 G(r)=(n-2)\int_r^\infty f(s)^{1-n}\dd s.
\]
For $0<r\leq1$, splitting the integral at $1$ gives
\[
 G(r)=r^{2-n}+H,
 \qquad
 H=(n-2)\int_1^\infty f(s)^{1-n}\dd s-1.
\]
Because $f(s)\leq s$ everywhere and $f(s)<s$ somewhere,
\[
 (n-2)\int_1^\infty f(s)^{1-n}\dd s
 >(n-2)\int_1^\infty s^{1-n}\dd s=1,
\]
so $H>0$.  Direct calculation gives
\begin{equation*}
 |\nabla b|^2=(1+Hr^{n-2})^{-2\frac{n-1}{n-2}}
\end{equation*}
near $p$.  
\end{proof}

\begin{corollary}[Sharpness]\label{cor:odd-sharpness}
For every odd integer \(n\ge 3\), there exists a complete, nonflat smooth Riemannian metric \(g\) on \(\mathbb{R}^n\) satisfying
\[
\sec \ge 0.
\]
Moreover, \(g\) is Euclidean in a neighborhood of the pole \(p\), and
\[
|\nabla b|^2 \in C^{n-3,1}\setminus C^{n-2},
\]
while \((\mathbb{R}^n,g)\) is not isometric to Euclidean space.
\end{corollary}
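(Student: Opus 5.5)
We take the metric of \propref{prop:even-counterexamples} for the given odd $n\geq3$. That proposition already supplies almost everything in the corollary. The metric is complete, rotationally symmetric on $\R^n$, has $\sec\geq0$, is nonflat (since $f''<0$ somewhere) and is Euclidean near the pole. It is not isometric to $\R^n$, because a nonflat metric cannot be isometric to a flat one; alternatively, its asymptotic volume ratio is $a^{n-1}<1$. What remains is the regularity claim $|\nabla b|^2\in C^{n-3,1}\setminus C^{n-2}$ near $p$, which we read off from the explicit formula \eqref{eq:gradb-even}.

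\emph{Positive regularity.} Near $p$ we write
\[
|\nabla b|^2=\Psi\bigl(Hr^{n-2}\bigr),\qquad \Psi(s)=(1+s)^{-2\frac{n-1}{n-2}},
\]
where $\Psi$ is real-analytic near $s=0$. Since the metric is Euclidean near $p$, $r=|x|$ in Cartesian coordinates. For odd $n$ the function $|x|^{n-2}$ is homogeneous of degree $n-2$ and smooth away from $0$. Its derivatives of order $k\le n-2$ are homogeneous of degree $n-2-k$, so:
\begin{itemize}[label={}]
\item for $k\le n-3$ they are bounded near $0$ and continuous up to $0$, with value $0$ there;
\item the derivatives of order $n-3$ are Lipschitz, because their gradient is homogeneous of degree $0$ and hence bounded.
\end{itemize}
The same holds for the higher powers $|x|^{j(n-2)}$, which are at least as regular. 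Composing with the analytic $\Psi$ (for example, expanding $\Psi$ as a convergent power series in $Hr^{n-2}$) then gives $|\nabla b|^2\in C^{n-3,1}$ near $p$.

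\emph{Failure of $C^{n-2}$.} We argue by contradiction, exactly as in the proof of \lemref{lem:gradb-expansion}. Restrict to a line through $p$ and set $f(t)=|\nabla b|^2(t\theta)-1$. Then
\[
f(t)=-\tfrac{2(n-1)}{n-2}H|t|^{n-2}+O(|t|^{n-1}).
\]
If $f$ were $C^{n-2}$, then $f(t)=O(|t|^{n-2})$ would force all Taylor coefficients below degree $n-2$ to vanish, so $f(t)=a_\theta t^{n-2}+o(|t|^{n-2})$. Because $n-2$ is odd, comparing $t>0$ with $t<0$ forces $H=0$. This contradicts $H>0$ from \propref{prop:even-counterexamples}. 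More directly, the $(n-2)$-nd derivative of $|t|^{n-2}$ is $(n-2)!\,\mathrm{sgn}(t)$, which jumps at $0$.

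The only step needing care is checking that the $C^{n-3,1}$ regularity survives composition with $\Psi$. This is a routine application of the chain rule together with the homogeneity bounds above, so we do not expect a real obstacle. We also note that the corollary shows the hypothesis \eqref{eq:gradb-regularity} of \thmref{thm:odd-rigidity} cannot be weakened to $C^{n-3,1}$. The curvature conditions \eqref{eq:curvature-jet-conditions} hold trivially here, since the metric is flat near $p$.
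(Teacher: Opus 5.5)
Your proposal is correct and follows the paper's proof. Both use the metric of \propref{prop:even-counterexamples} and the explicit formula $|\nabla b|^2=(1+Hr^{n-2})^{-2\frac{n-1}{n-2}}$, get $C^{n-3,1}$ from the regularity of $|x|^{n-2}$ for odd $n-2$ composed with a smooth function, and rule out $C^{n-2}$ using the odd-power leading term $-\frac{2(n-1)}{n-2}H|x|^{n-2}$ with $H>0$. Your parity argument from \lemref{lem:gradb-expansion} (or the one-sided jump of the $(n-2)$-nd derivative) simply spells out the step the paper compresses into ``therefore''.
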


\begin{proof}
Let \(g\) be the smooth metric on \(\mathbb{R}^n\) constructed in
\propref{prop:even-counterexamples}.
By \propref{prop:even-counterexamples}, the metric \(g\) is complete, satisfies
\[
\sec \ge 0,
\]
is Euclidean in a neighborhood of the pole \(p\), and
\((\mathbb{R}^n,g)\) is not isometric to Euclidean space. Moreover,
\[
|\nabla b|^2
=
\left(1+Hr^{n-2}\right)^{-\frac{2(n-1)}{n-2}},
\qquad H>0.
\]

Since \(n-2\) is odd, in normal coordinates centered at \(p\) we have
\[
r^{n-2}=|x|^{n-2}.
\]
The classical regularity of \(|x|^{n-2}\) is
\[
|x|^{n-2}\in C^{n-3,1}\setminus C^{n-2}.
\]
The function
\[
z\longmapsto (1+Hz)^{-\frac{2(n-1)}{n-2}}
\]
is smooth in a neighborhood of \(z=0\). Composing with this smooth function
preserves the critical regularity. Hence
\[
|\nabla b|^2\in C^{n-3,1}.
\]

By expanding \eqref{eq:gradb-even} near $0$,
\[
 |\nabla b|^2-1
 =-\frac{2(n-1)}{n-2}H|x|^{n-2}+O\bigl(|x|^{2n-4}\bigr).
\]
Therefore $|\nabla b|^2\in C^{n-3,1}\setminus C^{n-2}$.
\end{proof}

\begin{remark}
This corollary shows that the condition
\[
|\nabla b|^2\in C^{n-2}
\]
in \thmref{thm:odd-rigidity} is sharp: merely assuming
\[
|\nabla b|^2\in C^{n-3,1}
\]
is not enough to force the manifold to be Euclidean, even if the metric is
Euclidean in a neighborhood of the pole.
\end{remark}

Suppose, even in an even dimension, that the favorable local expansion
\[
 G=r^{2-n}+H_p+O_1(r)
\]
has already been established.  The calculation in the proof of
\lemref{lem:gradb-expansion} still gives
\[
 |\nabla b|^2
 =1-\frac{2(n-1)}{n-2}H_pr^{n-2}+O(r^{n-1}).
\]
If $n$ is even, however,
\[
 r^{n-2}=|x|^{n-2}
 =\bigl((x^1)^2+\cdots+(x^n)^2\bigr)^{(n-2)/2}
\]
is a smooth polynomial in normal coordinates. Since $n-2$ is even,
$H_pr^{n-2}$ is smooth, so the regularity of $|\nabla b|^2$ cannot force
$H_p=0$.
The following construction shows that this is a genuine geometric
obstruction rather than a limitation of the proof.

\begin{corollary}
For every even integer \(n\ge 4\), there exists a complete, nonflat smooth
Riemannian metric \(g\) on \(\mathbb{R}^n\) satisfying
$
\sec \ge 0.
$
Moreover, \(g\) is Euclidean in a neighborhood of the pole \(p\), and
$
|\nabla b|^2
$
admits a smooth extension across the pole \(p\).
\end{corollary}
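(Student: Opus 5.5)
The plan is to reuse the construction of \propref{prop:even-counterexamples} for an even dimension $n\geq4$ and to check the one property that changes with parity, namely the smoothness of $|\nabla b|^2$ at the pole. Fix $a\in(0,1)$ and a smooth nonincreasing $\varphi:[0,\infty)\to[a,1]$ with $\varphi=1$ on $[0,1]$, $\varphi=a$ on $[2,\infty)$, and $\varphi'<0$ somewhere. Set $f(r)=\int_0^r\varphi$ and take the model metric \eqref{eq:model-metric}. Since $f(r)=r$ on $[0,1]$, the metric is exactly Euclidean on the unit ball, so it is smooth on $\R^n$. \propref{prop:even-counterexamples} already supplies the remaining properties: completeness, $\sec\geq0$ via \eqref{eq:model-curvatures}, nonflatness, positive asymptotic volume ratio, and nonparabolicity with the explicit minimal Green function \eqref{eq:model-green}.

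Next I would record the pole expansion. For $0<r\leq1$, the proof of \propref{prop:even-counterexamples} gives $G=r^{2-n}+H$ with $H>0$. Hence
\[
 b=(r^{2-n}+H)^{-1/(n-2)}=r\,(1+Hr^{n-2})^{-1/(n-2)}.
\]
Differentiating radially, $b'=(1+Hr^{n-2})^{-(n-1)/(n-2)}$, which recovers \eqref{eq:gradb-even}:
\[
 |\nabla b|^2=(1+Hr^{n-2})^{-2\frac{n-1}{n-2}}.
\]

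The key step, and the only point where evenness of $n$ enters, is smoothness at the pole. On the Euclidean unit ball, normal coordinates at $p$ are Cartesian coordinates and $r=|x|$. Since $n-2=2k$ is even,
\[
 r^{n-2}=\bigl((x^1)^2+\cdots+(x^n)^2\bigr)^{k}
\]
is a polynomial in $x$. The map $z\mapsto(1+Hz)^{-2(n-1)/(n-2)}$ is smooth on $z>-1/H$, so it is smooth on $z\geq0$. Therefore $|\nabla b|^2$ is the composition of a smooth function with a polynomial on $B_1(p)\setminus\{p\}$. This composition extends smoothly across $p$ with value $1$, consistent with \propref{prop:Colding}(v).

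I do not expect a real obstacle. The only thing to watch is that each global property is genuinely inherited from \propref{prop:even-counterexamples} rather than re-proved. Finally, I would note that the example is not isometric to $\R^n$, since it has $f''<0$ somewhere and asymptotic volume ratio $a^{n-1}<1$. This shows that for even $n$ no amount of pole regularity of $|\nabla b|^2$ forces $H_p=0$, in contrast with \lemref{lem:gradb-expansion} for odd $n$.
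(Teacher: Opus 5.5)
Your proposal is correct and follows the paper's proof essentially step for step. You take the metric of \propref{prop:even-counterexamples}, observe that $r^{n-2}=|x|^{n-2}$ is a polynomial in normal coordinates when $n$ is even, and compose it with the function $z\mapsto(1+Hz)^{-2(n-1)/(n-2)}$, which is smooth near $z=0$. Your explicit derivation of $b'=(1+Hr^{n-2})^{-(n-1)/(n-2)}$ from $G=r^{2-n}+H$ fills in the ``direct calculation'' that the paper leaves implicit.
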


\begin{proof}
Let \(g\) be the smooth metric on \(\mathbb{R}^n\) constructed in
\propref{prop:even-counterexamples}.
By \propref{prop:even-counterexamples}, the metric \(g\) is complete, satisfies
\[
\sec \ge 0,
\]
is Euclidean in a neighborhood of the pole \(p\), and
\((\mathbb{R}^n,g)\) is not isometric to Euclidean space. Moreover,
\[
|\nabla b|^2
=
\left(1+Hr^{n-2}\right)^{-\frac{2(n-1)}{n-2}},
\qquad H>0.
\]

Since \(n\ge 4\) is even, in normal coordinates centered at \(p\),
\[
r^{n-2}=|x|^{n-2}
\]
is a polynomial. Near the pole,
\[
1+Hr^{n-2}>0.
\]
Therefore the function
\[
z\longmapsto (1+Hz)^{-\frac{2(n-1)}{n-2}}
\]
is real analytic near \(z=0\). Since \(r^{n-2}\) is a smooth polynomial in
the normal coordinates, it follows that
$
|\nabla b|^2
$
extends smoothly across the pole \(p\).
\end{proof}


\begin{thebibliography}{99}

\bibitem{Aubin}
T.~Aubin,
\emph{Some Nonlinear Problems in Riemannian Geometry},
Springer Monographs in Mathematics, Springer-Verlag, Berlin, 1998.

\bibitem{Colding}
T.~H. Colding,
New monotonicity formulas for Ricci curvature and applications. I,
\emph{Acta Math.} \textbf{209} (2012), no.~2, 229--263,
\href{https://doi.org/10.1007/s11511-012-0086-2}
{doi:10.1007/s11511-012-0086-2}.

\bibitem{GilbargSerrin}
D.~Gilbarg and J.~Serrin,
On isolated singularities of solutions of second order elliptic differential
equations,
\emph{J. Analyse Math.} \textbf{4} (1955/56), 309--340,
\href{https://doi.org/10.1007/BF02787726}{doi:10.1007/BF02787726}.

\bibitem{GilbargTrudinger}
D.~Gilbarg and N.~S. Trudinger,
\emph{Elliptic Partial Differential Equations of Second Order},
Classics in Mathematics, reprint of the 1998 edition,
Springer-Verlag, Berlin, 2001.

\bibitem{Grigoryan}
A.~Grigor'yan,
\emph{Heat Kernel and Analysis on Manifolds},
AMS/IP Studies in Advanced Mathematics, vol.~47,
American Mathematical Society, Providence, RI, 2009.

\bibitem{Jost}
J.~Jost,
\emph{Riemannian Geometry and Geometric Analysis}, sixth ed.,
Universitext, Springer, Heidelberg, 2011.

\bibitem{LiTam}
P.~Li and L.-F.~Tam,
Green's functions, harmonic functions, and volume comparison,
\emph{J. Differential Geom.} \textbf{41} (1995), no.~2, 277--318.

\bibitem{Petersen}
P.~Petersen,
\emph{Riemannian Geometry}, third ed.,
Graduate Texts in Mathematics, vol.~171, Springer, Cham, 2016.

\bibitem{Varopoulos}
N.~T. Varopoulos,
The Poisson kernel on positively curved manifolds,
\emph{J. Funct. Anal.} \textbf{44} (1981), 359--380.

\end{thebibliography}
\end{document}